\renewcommand{\phi}{\varphi}
\newcommand{\Ge}{\mathfrak{g}}
\newcommand{\Ges}{\mathfrak{g}^{\star}}
\newcommand{\Fe}{\mathcal{F}}
\newcommand{\e}{\epsilon}
\newcommand{\R}{\boldsymbol{R}}
\newcommand{\eR}{\mathcal{R}}
\newcommand{\Schw}{\mathcal{S}}
\newcommand{\C}{\boldsymbol{C}}
\newtheorem{theorem}[equation]{Theorem} 
\newtheorem{lemma}[equation]{Lemma}    
\newtheorem{corollary}[equation]{Corollary}
\newtheorem{proposition}[equation]{Proposition}
\theoremstyle{remark}
\newtheorem{remark}[equation]{Remark}
\numberwithin{equation}{section}
\title[Flag kernels]
 {$\bf L^p$-boundedness of flag kernels\\ on homogeneous groups} 
\author{P. G{\l}owacki}
\address{Institute of Mathematics, University of Wroc{\l}aw,
pl. Grunwaldzki 2/4, 50-384 Wroc{\l}aw, Poland}
\subjclass[2000]{43A15 (primary), 43A32, 43A85 (secondary).}
\begin{document}

\begin{abstract}
We prove that  the flag kernel singular integral operators of  Nagel-Ricci-Stein on a homogeneous group are bounded on  $L^p$, $1<p<\infty$. The gradation associated with the kernels is the natural gradation of the underlying Lie algebra. Our main tools are the Littlewood-Paley theory and a symbolic calculus combined in the spirit of Duoandikoetxea and Rubio de Francia.
\end{abstract}

\maketitle
\section{Introduction}
Flag kernels on homogeneous groups have been introduced by Nagel-Ricci-Stein \cite{nagel} in their study of quadratic $CR$-manifolds. They can be regarded as a generalization of Calder\'on-Zygmund singular kernels with singularities extending over the whole of the hyperspace $x_1=0$, where $x_1$ is the top level variable. The definition is complex (see below), as it involves cancellation conditions for each variable separately. However, the descritption of flag kernels in terms of their Fourier transforms is much simpler and bears a striking resemblance to that of the symbols of convolution operators considered independently by  the author (in, e.g. \cite{arkiv2007}).

In Nagel-Ricci-Stein \cite{nagel} we find an $L^p$-boundedness theorem for the very special flag kernels where the associated gradation consists of commuting subalgebras of the underlying Lie algebra of the homogeneous group. The natural question of what happens if the gradation is the natural gradation of the homogeneous Lie algebra is left open. The aim of this paper is to answer the question in the affirmative. We prove that such flag kernels give rise to bounded operators.

The smooth  symbolic calculus mentioned above has been adapted to an extended class of flag kernels of small (positive and negative) orders and combined with a variant of the Littlewood-Paley theory built on a stable semigroup of measures with smooth densities very similar to the Poisson kernel on the Euclidean space. The strong maximal function of Christ \cite{christ} is also instrumental. The  approach has been inspired by the well-known paper by Duoandicoetxea and Rubio de Francia \cite{duoandi}. The dependence of the present paper on Duoandicoetxea and Rubio de Francia \cite{duoandi} is evident throughout.

The class of flag kernels dealt with here is in fact an algebra. For this the reader is referred to  \cite{colloquium2010} where also the $L^2$-boundedness of flag kernels is proved solely by means of the symbolic calculus. 

After this paper had been completed,  a preprint of Nagel-Ricci-Stein-Wainger \textit{Singular integrals with flag kernels on homogeneous groups I,} has been made available, where the $L^p$-boundedness theorem for flag kernels is proved. This comprehensive treatment of flag kernels on homogeneous groups has been announced for some time. Professor Stein has lectured a couple of times on the subject, see, e.g. \cite{stein}. The authors also use a version of Littlewood-Paley theory but otherwise the approach differs from the one presented here in many respects, the most important being our use of the symbolic calculus and partitions of unity related to a stable semigroup of measures. That is why we believe that what  is presented here has an independent  value and may count as a contribution to the theory.   

\section{Preliminaries}
Let $\Ge$ be a nilpotent Lie algebra with a fixed Euclidean structure and $\Ges$ its dual.  Let $\delta_tx=tx$, $t>0$ be a family of  dilations on $\Ge$ and let
\[
\Ge_j=\{x\in\Ge: \delta_tx=t^{p_j}\cdot x\},
\hspace{2em}
1\le j\le d,
\]
where $1=p_1< p_2<\dots <p_d$. Denote by
\[
Q_j=p_j\cdot\dim\Ge_j
\]
the homogenous dimension of $\Ge_j$. The homogeneous dimension of $\Ge$ is
\[
Q=\sum_{j=1}^dQ_j.
\]

We have
\begin{equation}\label{grad}
\Ge=\bigoplus_{j=1}^d\Ge_j,
\qquad
\Ges=\bigoplus_{j=1}^d\Ges_j
\end{equation}
and
\[
[\Ge_i,\Ge_j]\subset \left\{
\begin{array}{ll}
\Ge_k, & {\rm if \ }  p_i+p_j=p_k,\cr \{0\}, & {\rm if \ } p_i+p_j\notin{\mathcal{P},}
\end{array}
\right.
\]
where $\mathcal{P}=\{p_j:1\le j\le d\}$.

Let
\[
x\to|x|\approx\sum_{j=1}^d\|x_j\|^{1/p_j}
\]
be a homogeneous norm on $\Ge$ smooth away from the origin. Let also
\[
|x|_j=|(x_1,x_2,\dots,x_j,0,\dots,0)|,
\qquad
1\le j\le d.
\]
In particular, $|x|_1=|x_1|$, and $|x|_d=|x|$. Another notation will be applied to $\Ges$. For $\xi\in\Ges$,
\[
|\xi|_j=|(0,\dots,0,\xi_j,\xi_{j+1},\dots,\xi_d)|,
\qquad
1\le j\le d.
\]
In particular, $|\xi|_1=|\xi|$, and $|\xi|_d=|\xi_d|$.

We shall also regard $\Ge$ as a Lie group with the
Campbell-Hausdorff multiplication
\[
xy=x+y+r(x,y),
\]
where $r(x,y)$ is the (finite) sum of terms of order at least $2$ in the Campbell-Hausdorff
series for $\Ge$. Under this identification the homogeneous ideals
\[
 \Ge^{(k)}=\bigoplus_{j=k}^d\Ge_j
\]
are normal subgroups.

In expressions like $D^{\alpha}$ or $x^{\alpha}$ we shall use multiindices
\[
\alpha=(\alpha_1,\alpha_2,\dots,\alpha_d),
\]
where 
\[
\alpha_k=(\alpha_{k1},\alpha_{k1},\dots,\alpha_{kn_k}),
\qquad
n_k=\dim\Ge_k=\dim\Ges_k,
\] 
are themselves multiindices with positive integer entries corresponding to the spaces $\Ge_k$ or $\Ges_k$. The homogeneous length of $\alpha$ is defined by
\[
|\alpha|=\sum_{k=1}^d|\alpha_k|,
\qquad|\alpha_k|=p_k(\alpha_{k1}+\alpha_{k2}+\dots+\alpha_{kn_k}). 
\]

The Schwartz space of smooth functiions which vanish rapidly at infinity along with their derivatives will be denoted by $\Schw(\Ge)$. For a tempered distribution $K$, that is a continuous linear functional on $\Schw(\Ge)$, we shall write
\[
 \langle K,f\rangle=\int_{\Ge}f(x)K(x)\,dx,
\qquad
f\in\Schw(\Ge),
\]
without implying thereby that $K$ is a locally integrable function.

 Even though the flag kernels are our prime concern here we need a broader class of kernels to properly deal with them. In \cite{studia2010}, we proposed a natural generalization of  the flag kernels of Nagel-Ricci-Stein. Let 
\[
 \|f\|_{(k)}=\max_{|\alpha|\le Q_k+1}\sup_{x\in\Ge_k}(1+|x|)^{Q_k+1}|D^{\alpha}f(x)|
\]
be a fixed norm in the Schwartz space $\Schw(\Ge_k)$. Let
\[
\mathcal{N}=\{\nu=(\nu_1,\nu_2,\dots,\nu_d): |\nu_k|<Q_k, \ 1\le k\le d\}.
\]
Let $\nu\in\mathcal{N}$. We define the class $\Fe(\nu)$ by induction on the homogeneous step $d$. When $d=0$ the elements of $\Fe(\emptyset)$ are simply constants. If $d\ge1$, we say that a distribution $K\in\Schw^{\star}(\Ge)$ is in $\Fe(\nu)$ if it is smooth away from the hyperspace $x_1=0$ and satisfies the following conditions:

i) For every multiindex $\alpha$,
\begin{equation}\label{size}
|D^{\alpha}K(x)|\le C_{\alpha}|x|_1^{-\nu_1-Q_1-|\alpha_1|}|x|_2^{-\nu_2-Q_2-|\alpha_2|}\dots
|x|_d^{-\nu_d-Q_d-|\alpha_d|}
\end{equation}
for $x_1\neq0$; 

ii) For any $1\le k\le d$,  
\begin{equation}\label{cancellation}
<K_{R,\phi},f>=R^{-\nu_k}\int_{\Ge}\phi(Rx_k)f(x_1,\dots,x_{k-1},x_{k+1},\dots,x_d)K(x)\,dx
\end{equation}
is in $\Fe(\nu_{(k)})$ on $\oplus_{j\neq k}\Ge_j$, where $\nu_{(k)}=(\nu_1,\dots,\nu_{k-1},\nu_{k+1},\dots,\nu_d)$, and this is uniform in $\phi\in\Schw(\Ge_1)$ with $|\phi\|_{(k)}\le1$ and $R>0$.  (Note that the meaning  of \textit{uniform boundedness} of a family of members of $\Fe(\nu)$ is obvious in the case $d=0$ and, for $d\ge1$, can be defined by induction.)

 For every $N$, we define a norm $\|\cdot\|_{\nu,N}$ in $\Fe(\nu)$ as the maximum of all the bounds occurring in the definition. First, we let
\[
 s_N^{\nu}(P)=\max_{|\alpha|\le N}\sup_{x_1\neq0}\prod_{k=1}^d|x|_k^{Q_k+\nu_k+|\alpha_k|}|D^{\alpha}K(x)|.
\]
and, if $d=1$, 
\[
 \|K\|_{\nu_1,N}=s_N^{\nu_1}(K)+\sup_{|\phi\|_{(1)}\le1}\sup_{R>0}R^{-\nu_1}|<K,\phi\circ\delta_R>|.
\]

If $d>1$, we let
\[
 \|K\|_{\nu,N}=s_N^{\nu}(K)+\max_{1\le k\le d}\sup_{\|\phi\|_{(k)}\le1}\sup_{R>0}\|K_{R,\phi}\|_{\nu_{(k)},N}.
\]
Thus, $\Fe(\nu)$ can be regarded as a locally convex topological vector space. Let us remark that  $\Fe(0)=\Fe(0,0,\dots,0)$ is exactly the class of flag kernels of Nagel-Ricci-Stein \cite{nagel} (see Corollary 3.7 of \cite{studia2010}).

For a $K\in\Schw^{\star}(\Ge)$, let
\[
<\widetilde{K},f>=\int_{\Ge}f(x^{-1})K(dx),
\qquad
f\in\Schw(\Ge).
\]
The following three propositions have been proved in \cite{colloquium2010} and \cite{studia2010}.

\begin{proposition}[(Theorem 2.5 of \cite{colloquium2010})]\label{l2bound_flag}
 Let $K\in\Fe(0)$ be a flag kernel on $\Ge$. The convolution operator
$f\to f\star\widetilde{K}$
defined initially on $\Schw(\Ge)$ extends uniquely to a bounded operator on $L^2(\Ge)$. \end{proposition}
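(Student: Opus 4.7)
The plan is to proceed by induction on the homogeneous step $d$, reducing to a standard Calder\'on--Zygmund kernel when $d=1$ and peeling off one layer of the gradation at each inductive step.

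When $d=1$, the space $\Ge=\Ge_1$ is itself homogeneous of a single degree and the size and cancellation conditions (\ref{size})--(\ref{cancellation}) with $\nu_1=0$ specialize to the standard Calder\'on--Zygmund conditions on a stratified group. The $L^2$-boundedness then follows from classical singular integral theory adapted to homogeneous groups (e.g.\ via $T(1)$ or by direct Cotlar--Stein applied to a Littlewood--Paley decomposition in the single scale).

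For the inductive step, fix a smooth partition of unity $\sum_{j\in\Z}\phi(\delta_{2^{-j}}x_d)=1$ for $x_d\neq0$, where $\phi\in\Schw(\Ge_d)$ is supported in an annulus $1/2\le|x_d|\le2$. This decomposes $K=\sum_j K_j$ in the top-level variable. By the size estimate (\ref{size}), each $K_j$ is well localized in $x_d$ at scale $2^j$. The cancellation condition (ii) applied with $k=d$ is exactly what is needed to read off that, after integrating against $\phi\circ\delta_{R}$ in the top variable, what remains is a flag kernel on $\Ge_1\oplus\cdots\oplus\Ge_{d-1}$; so by the inductive hypothesis the operator obtained by integrating $K_j$ against any such $\phi$ is $L^2$-bounded with norm independent of $j$.

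The main step, and the principal obstacle, is to verify the almost-orthogonality needed for Cotlar--Stein, i.e.\ estimates of the form
\[
\|\widetilde{K_i}\star K_j\|_{op}+\|K_i\star\widetilde{K_j}\|_{op}\le C\,2^{-\varepsilon|i-j|},
\]
for some $\varepsilon>0$. Here the diagonal bound is furnished by the inductive hypothesis applied slice-by-slice, while the off-diagonal decay has to be produced by exploiting the cancellation of $K_j$ in $x_d$ against the smoothness of $K_i$ (and vice versa). On a homogeneous group the convolution $K_i\star\widetilde{K_j}$ is governed by the Campbell--Hausdorff formula, and one has to check that the interaction terms $r(x,y)$ do not destroy the scale separation in $\Ge_d$. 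This is where the fact that $\Ge^{(k)}$ are normal subgroups is used: convolution projects compatibly onto the top quotient $\Ge/\Ge^{(2)}$, so the cancellation in the top variable behaves essentially as on an abelian group, and standard integration by parts (using the $D^\alpha$-bounds of (\ref{size})) yields the required geometric decay. Summing via Cotlar--Stein and invoking the uniform diagonal bound gives $L^2$-boundedness of $f\mapsto f\star\widetilde K$, completing the induction.
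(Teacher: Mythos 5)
This proposition is not proved in the present paper; it is cited from \cite{colloquium2010}, where, as the introduction notes, the $L^2$-boundedness is obtained ``solely by means of the symbolic calculus,'' i.e.\ by exploiting Proposition \ref{gen_flag} (the Fourier-transform characterization of $\Fe(\nu)$) together with the composition theorem, Proposition \ref{composition_flag}. Your plan --- a dyadic decomposition in the innermost variable $x_d$ plus Cotlar--Stein, with an induction on $d$ --- is therefore a genuinely different route, but as written it has real gaps at both of the two points Cotlar--Stein requires.

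First, the ``diagonal'' bound. You claim the uniform $L^2(\Ge)$-boundedness of the pieces $K_j=\phi(\delta_{2^{-j}}x_d)K$ ``is furnished by the inductive hypothesis applied slice-by-slice.'' But the cancellation condition (\ref{cancellation}) with $k=d$ produces the distribution $K_{R,\phi}$ on $\Ge_1\oplus\cdots\oplus\Ge_{d-1}$ obtained by \emph{pairing} $K$ against $\phi(Rx_d)$, and the inductive hypothesis controls the convolution operator associated to $K_{R,\phi}$ on $L^2$ of the smaller group. That is not the same object as the convolution operator $f\mapsto f\star\widetilde{K_j}$ on $L^2(\Ge)$: the latter still involves the full group product on $\Ge$, and $K_j$ is a distribution on all of $\Ge$, not a tensor of a bump in $x_d$ and a flag kernel in the remaining variables. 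Some argument is needed to pass from the paired statement to the operator statement --- for instance by Fourier transforming in the central $x_d$-direction and controlling the resulting family of ``twisted'' operators uniformly --- and none is given. Second, the off-diagonal bound $\|K_i\star\widetilde{K_j}\|_{op}\lesssim 2^{-\e|i-j|}$ is the crux of the method and is only asserted. The heuristic offered is also confused: the ``top quotient'' $\Ge/\Ge^{(2)}$ is $\Ge_1$, whereas the direction in which you decompose is $\Ge_d$, which is a central subgroup, not a quotient; the relevant structural fact is centrality of $\Ge_d$, not normality of $\Ge^{(2)}$, and even granting it, one must actually carry out the cancellation-against-smoothness integration by parts in the presence of the Campbell--Hausdorff corrections $r(x,y)$. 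In short, the two analytic estimates on which the whole scheme rests are both left unproved, so the argument as it stands does not establish the proposition.

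For comparison, the approach of \cite{colloquium2010} avoids spatial dyadic decomposition altogether: it works with the Euclidean Fourier transform (Proposition \ref{gen_flag} shows $\widehat K$ is bounded when $\nu=0$) and uses the closure of the flag kernel classes under convolution (Proposition \ref{composition_flag}) to control iterated compositions, which is a cleaner way of producing the $L^2$ bound in the noncommutative setting where Plancherel alone does not suffice.
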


\begin{proposition}[(Proposition 1.5 of \cite{studia2010})]\label{gen_flag}
Let $\nu\in\mathcal{N}$. A distribution $K$ is in $\Fe(\nu)$ if and only if its Fourier transform is locally integrable, smooth for $\xi_d\neq0$, and satisfies
\begin{equation}\label{multiplier}
|D^{\alpha}\widehat{K}(\xi)|\le C_{\alpha}|\xi|_1^{\nu_1-|\alpha_1|}\dots|\xi|_d^{\nu_d-|\alpha_d|},
\qquad
\xi_d\neq0.
\end{equation}
\end{proposition}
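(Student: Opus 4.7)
\emph{Proof plan.} I would argue by induction on the length $d$ of the gradation, handling each direction through a dyadic decomposition adapted to the flag structure.

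For the base case $d=1$, the class $\Fe(\nu_1)$ reduces to homogeneous singular integrals of order $\nu_1\in(-Q_1,Q_1)$ on $\Ge_1$, and \eqref{multiplier} reduces to the standard multiplier condition of the same order. The equivalence follows from the classical dyadic decomposition $\widehat K=\sum_{j\in\mathbb{Z}}\widehat K\cdot\psi(2^{-j}|\cdot|)$ with $\psi\in C_c^\infty((0,\infty))$: the size bounds on the kernel side and the Fourier bounds on each annular piece are matched by the uncertainty principle, the cancellation condition corresponds to local integrability of $\widehat K$, and smoothness of $\widehat K$ on $\Ges_1\setminus\{0\}$ to Schwartz decay of $K$ at infinity.

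For the inductive step, fix a bump $\psi\in C_c^\infty((0,\infty))$ with $\sum_j\psi(2^{-j}t)=1$ for $t>0$, and decompose
\[
\widehat K=\sum_{\mathbf{j}\in\mathbb{Z}^d}\widehat K_{\mathbf{j}},\qquad \widehat K_{\mathbf{j}}(\xi)=\widehat K(\xi)\prod_{k=1}^d\psi\bigl(2^{-j_k}|\xi|_k\bigr),
\]
on $\{\xi_d\neq 0\}$; since $|\xi|_1\ge|\xi|_2\ge\ldots\ge|\xi|_d$, the nonzero pieces occur only for $j_1\ge j_2\ge\ldots\ge j_d$. For $(\Rightarrow)$, condition (ii) with $k=1$ asserts that pairing $K$ with a bump in $x_1$ at scale $R=2^{j_1}$ produces, after renormalization by $R^{-\nu_1}=2^{-j_1\nu_1}$, a kernel in $\Fe(\nu_{(1)})$ on $\Ge^{(2)}$. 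The induction hypothesis applied slice by slice supplies the multiplier estimates in $(\xi_2,\ldots,\xi_d)$, and reassembling in $j_1$ with the $2^{j_1\nu_1}$ weight reconstructs the $|\xi|_1^{\nu_1-|\alpha_1|}$ factor of \eqref{multiplier}. For $(\Leftarrow)$, Fourier inversion of each $\widehat K_{\mathbf{j}}$ yields a Schwartz function whose mass is controlled by $\prod_k 2^{j_k\nu_k}$ and concentrated at the dual scales in $x$; summing over $\mathbf{j}$ under the ordering constraint produces \eqref{size} away from $x_1=0$, and the cancellation conditions (ii) follow by matching the summation scales to the normalizations $R^{-\nu_k}$.

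\emph{Main obstacle.} The delicate point is to verify quantitatively that the pairing appearing in condition (ii) corresponds to Fourier localization at the matched scale on the multiplier side, with the factors $R^{-\nu_k}$ in (ii) matching the weights $|\xi|_k^{\nu_k}$ in \eqref{multiplier}. Making this identification rigorous — and checking that the sums in $\mathbf{j}$, taken over the cone $j_1\ge\ldots\ge j_d$, converge absolutely under the assumption $|\nu_k|<Q_k$ — is the crux of the argument; once this compatibility between cancellation on the $x$-side and regularity on the $\xi$-side is tracked through the dyadic decomposition, the inductive hypothesis applies mechanically.
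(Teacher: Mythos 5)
The paper does not prove this proposition: it is quoted verbatim as Proposition 1.5 of the author's companion paper \cite{studia2010} and used as a black box, with a pointer to Theorem 2.3.9 of Nagel--Ricci--Stein \cite{nagel} for the $\nu=0$ case. There is therefore no in-paper argument for your sketch to be compared against; the comparison has to be with the general template in those references.

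As a plan, induction on $d$ through a dyadic decomposition of $\widehat{K}$ over the cone $j_1\ge\cdots\ge j_d$ is indeed the standard strategy and the right framework. But the sketch skirts exactly the steps that carry the proof, and some of the asserted identifications are off. In the base case $d=1$, the dictionary you propose (``cancellation $\leftrightarrow$ local integrability of $\widehat K$'', ``smoothness of $\widehat K$ $\leftrightarrow$ Schwartz decay of $K$'') is not correct: $K\in\Fe(\nu_1)$ has only the polynomial decay $|K(x)|\lesssim|x|^{-Q_1-\nu_1}$, not Schwartz decay, and the multiplier bound on each annulus encodes the size and cancellation conditions jointly (via integration by parts against the annular pieces of $K$), not as two separate implications. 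In the inductive step for $(\Rightarrow)$, the cancellation condition (ii) with $k=1$ controls $R^{-\nu_1}\int\phi(Rx_1)K(x)\,dx_1$; on the Fourier side this is a \emph{smoothed average} of $\widehat K(\cdot,\xi')$ in $\xi_1$ at scale $R$ (convolution with $R^{-Q_1}\widehat\phi(\cdot/R)$), not multiplication by a dyadic cutoff $\psi(2^{-j_1}|\xi|_1)$. Passing from uniform $\Fe(\nu_{(1)})$-bounds on these averages, together with the size condition (i) for arbitrary $\alpha_1$ (which your sketch does not invoke at this step and which must be what produces the $\xi_1$-derivatives), to the pointwise bound $|D^{\alpha_1}_{\xi_1}\widehat K(\xi)|\lesssim|\xi|_1^{\nu_1-|\alpha_1|}\cdots$ is precisely the crux you flag; the sketch names it but does not indicate how to close it. In the $(\Leftarrow)$ direction, the Fourier-localized pieces $K_{\mathbf j}$ are not compactly supported in $x$, so establishing \eqref{size} at a fixed $x$ with $x_1\neq0$ requires controlling the tails of the full sum over $\mathbf{j}$, and the cancellation conditions (ii) must be verified for every $k$, not just $k=1$; ``matching the summation scales to $R^{-\nu_k}$'' is an assertion, not an argument. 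So the approach is the right one and the obstacles you identify are genuine, but the sketch as written does not supply the ideas needed to get past them.
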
 
\noindent
Cf. also the original Theorem 2.3.9 of Nagel-Ricci-Stein \cite{nagel} for kernels $K\in\Fe(0)$.
\begin{proposition}[Theorem 4.8 of \cite{studia2010}]\label{composition_flag}
Let $\nu,\mu,\nu+\mu\in\mathcal{N}$. Let $K\in\Fe(\nu)$, $L\in\Fe(\mu)$. Let $\phi=\otimes_{k=1}^d\phi_k\in C_c^{\infty}(\Ge)$ be equal to $1$ in a neighbourhood of $0$. There exists a $P=P_{K,L}\in\Fe(\nu+\mu)$ such that
\[
P=\lim_{\e\to0}K_{\e}\star L
\]
in the sense of distributions, where 
\[
<K_{\e},f>=\int_{\Ge}\phi(\e x)f(x)K(dx),
\qquad
f\in\Schw(\Ge).
\]
Moreover, the mapping $(K,L)\to P_{K,L}$ is continuous.
 \end{proposition}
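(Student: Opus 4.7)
The plan is to work on the Fourier transform side. By Proposition~\ref{gen_flag}, membership in $\Fe(\nu)$ is equivalent to the multiplier estimates~\eqref{multiplier} for the Euclidean Fourier transform. Set $m=\widehat{K}$ and $n=\widehat{L}$. The goal is therefore to exhibit a tempered distribution $P$ such that $\widehat{P}$ satisfies \eqref{multiplier} with exponent $\nu+\mu$, and such that $K_{\e}\star L\to P$ in $\Schw^{\star}(\Ge)$.

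First, I would address existence of the limit. Since $K_{\e}$ is a compactly supported distribution of finite order, $K_{\e}\star L$ is well-defined in $\Schw^{\star}(\Ge)$. Testing against $f\in\Schw(\Ge)$ and using the pointwise size estimate \eqref{size} together with the hypothesis $\nu+\mu\in\mathcal{N}$ (which supplies the borderline integrability near $x_1=0$ after pairing), a dominated-convergence argument shows that $\langle K_{\e}\star L,f\rangle$ converges as $\e\to 0$; call the limit $\langle P,f\rangle$.

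To identify $\widehat{P}$, I would invoke the Campbell-Hausdorff formula $xy=x+y+r(x,y)$, where $r(x,y)$ is a finite sum of iterated brackets of bi-homogeneous degree $\ge 2$. Because $\Ge$ is nilpotent, the Taylor expansion of $e^{-i\langle\xi,r(x,y)\rangle}$ terminates, so the Euclidean Fourier transform of $K_{\e}\star L$ is a finite sum
\[
\widehat{K_{\e}\star L}(\xi)=\widehat{K_{\e}}(\xi)\,n(\xi)+\sum_{\text{finite}}C_{\alpha,\beta,\gamma}\,\xi^{\gamma}\,D^{\alpha}\widehat{K_{\e}}(\xi)\,D^{\beta}n(\xi),
\]
for multi-indices $(\alpha,\beta,\gamma)$ prescribed by the homogeneity of $r$. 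The leading term converges to $m(\xi)n(\xi)$ pointwise off $\{\xi_d=0\}$, and by Leibniz plus \eqref{multiplier} it satisfies the exponent-$\nu+\mu$ bounds since $|\xi|_k^{\nu_k-|\alpha_k'|}\cdot|\xi|_k^{\mu_k-|\alpha_k''|}=|\xi|_k^{\nu_k+\mu_k-|\alpha_k|}$. For each correction term, the bi-homogeneity of $r$ forces a compatibility between $\gamma$ and the extra derivatives in $\alpha,\beta$, so that $\xi^{\gamma}D^{\alpha}m\,D^{\beta}n$ satisfies the same multiplier bound and in fact gains a strictly positive margin, making the correction manifestly lower order than the leading term.

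The main obstacle will be the bookkeeping of these correction terms in the anisotropic flag setting. A derivative in a variable of level $j$ contributes $|\xi|_k^{-p_j}$ only for those $k\le j$, while brackets mixing several levels in $r(x,y)$ yield $\xi^{\gamma}$ with components spread across different levels. One must verify, term by term, that the $|\xi|_k$ factors introduced by $\xi^{\gamma}$ compensate exactly (and slightly beyond) for those removed by the mixed derivatives on $m$ and $n$, so that the resulting symbol still lies in the class dictated by Proposition~\ref{gen_flag}. Once this is done, passing $\e\to 0$ is routine because $\widehat{K_{\e}}\to m$ in $C^{\infty}$ off a neighbourhood of $\{\xi_d=0\}$ with uniform bounds. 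Continuity of $(K,L)\mapsto P_{K,L}$ then follows automatically, since every estimate above depends linearly on finitely many of the seminorms $\|K\|_{\nu,N}$ and $\|L\|_{\mu,N}$.
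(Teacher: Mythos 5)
This proposition is cited from \cite{studia2010} (Theorem 4.8) and is not proved in the present paper, so there is no in-paper argument to compare your proposal against. Your argument, however, contains a genuine error at its core. You assert that because $\Ge$ is nilpotent ``the Taylor expansion of $e^{-i\langle\xi,r(x,y)\rangle}$ terminates,'' and you deduce an exact finite formula
\[
\widehat{K_{\e}\star L}(\xi)=\widehat{K_{\e}}(\xi)\,n(\xi)+\sum_{\text{finite}}C_{\alpha,\beta,\gamma}\,\xi^{\gamma}D^{\alpha}\widehat{K_{\e}}(\xi)\,D^{\beta}n(\xi).
\]
Nilpotency guarantees that the Campbell--Hausdorff correction $r(x,y)$ is a polynomial in $(x,y)$, but it does not make the \emph{exponential} $e^{-i\langle\xi,r(x,y)\rangle}$ a polynomial: the Taylor series of $e^{z}$ never terminates. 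Already on the Heisenberg group, $r(x,y)=\tfrac12[x,y]$ lies in the one-dimensional centre and the phase $e^{-\frac{i}{2}\xi_{2}[x_{1},y_{1}]}$ is a bona fide oscillatory factor, not a polynomial in $(x_{1},y_{1})$ nor in $\xi_{2}$, and no finite truncation of its Taylor expansion reproduces it. The Euclidean Fourier transform of a \emph{group} convolution on a nonabelian nilpotent group is an oscillatory integral; relating it to ordinary products of symbols is precisely the content of the Melin calculus developed in \cite{inventiones1986} and \cite{arkiv2007}, not a finite Leibniz expansion. Since the identification of $\widehat{P}$, the multiplier estimates on the ``correction terms,'' and the passage to the limit all hinge on this nonexistent decomposition, the argument does not go through.

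The earlier step is also too quick. Writing $\langle K_{\e}\star L,f\rangle=\langle L,\widetilde{K_{\e}}\star f\rangle$ is legitimate because $K_{\e}$ has compact support, but as $\e\to0$ the test object $\widetilde{K_{\e}}\star f$ does not stay in a fixed bounded set of $\Schw(\Ge)$ (a flag kernel convolved with a Schwartz function generally has only slow decay), and $L$ is singular along $x_1=0$. Invoking the size bound \eqref{size} and ``dominated convergence'' does not by itself control this pairing; one needs to exploit the cancellation conditions \eqref{cancellation}, which your sketch never uses.
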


\section{Semigroups of measures}

Following Folland-Stein \cite{folland}, we say that a function $\phi$ belongs to the class $\mathcal{R}(a)$, where $a>0$, if it is smooth and
\begin{equation}\label{rclass}
|D^{\alpha}\phi(x)|\le C_{\alpha}(1+|x|)^{-Q-a-|\alpha|},
\qquad
{\rm all \ } \alpha.
\end{equation}

\begin{proposition}\label{fourier}
Let $\phi\in\mathcal{R}(a)$ for some $0<a<1$ and let $\int\phi=0$. Then $\phi\in\Fe(a)$.
\end{proposition}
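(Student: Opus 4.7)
My plan is to invoke Proposition \ref{gen_flag} (the Fourier-side characterization of $\Fe(\nu)$), identifying $\Fe(a)$ with $\Fe(\nu)$ for $\nu=(a,0,\dots,0)$; it suffices to check that $\widehat{\phi}$ is smooth off $\{\xi_d=0\}$ and satisfies
\[
 |D^{\alpha}\widehat{\phi}(\xi)|\le C_{\alpha}|\xi|^{a-|\alpha_1|}\prod_{k=2}^{d}|\xi|_k^{-|\alpha_k|},\qquad \xi_d\ne0.
\]
The heart of the matter is the case $\alpha=0$, namely $|\widehat{\phi}(\xi)|\le C|\xi|^{a}$. For $|\xi|\ge1$ this is trivial from $\widehat{\phi}\in L^{\infty}$. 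For $|\xi|<1$ I would invoke $\int\phi=0$ to write $\widehat{\phi}(\xi)=\int\phi(x)(e^{-i\xi\cdot x}-1)\,dx$ and split the integration at $|x|=|\xi|^{-1}$; on the inner region I use the homogeneous estimate $|\xi\cdot x|\le\sum_j\|\xi_j\|\,\|x_j\|\le d|\xi||x|$ (valid as $|\xi||x|\le1$ and $p_j\ge1$) together with $|e^{iy}-1|\le|y|$, while on the outer region the crude $|e^{iy}-1|\le 2$ and the decay of $\phi$ suffice. The hypothesis $a<1$ intervenes precisely to guarantee $\int_1^{|\xi|^{-1}}r^{-a}\,dr=O(|\xi|^{a-1})$, so that both pieces contribute $O(|\xi|^{a})$.

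For the derivative bounds I would differentiate under the integral, writing $D^{\alpha}\widehat{\phi}(\xi)=\int(-ix)^{\alpha}\phi(x)e^{-i\xi\cdot x}\,dx$, and wherever mean-zero permits, replacing $e^{-i\xi\cdot x}$ by $e^{-i\xi\cdot x}-1$ in order to handle integrability of $x^{\alpha}\phi$. The factor $|\xi|_k^{-|\alpha_k|}$ for $k\ge2$ is extracted by integration by parts in the variable $x_j$, for the index $j\ge k$ that realizes $\|\xi_j\|^{1/p_j}\sim|\xi|_k$: each such integration trades a factor of $\|\xi_j\|$ for a derivative of $x^{\alpha}\phi$, which by the Leibniz rule and the $\mathcal{R}(a)$-estimates on $\phi$ remains integrable. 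Combining this with the cancellation argument of the previous paragraph for the $|\xi|^{a}$ factor produces the full multiplier bound, and Proposition \ref{gen_flag} then delivers $\phi\in\Fe(a)$.

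The main obstacle is weaving together the H\"older gain $|\xi|^{a}$ (coming from mean-zero in the top-level variable) with the product decay in the $|\xi|_k$ for $k\ge2$ (coming from integration by parts in the higher-level variables) in one uniform estimate. This will force a case analysis according to which $|\xi|_k$ is dominant, with a corresponding choice in each case of both the right integration-by-parts direction and the appropriate mean-zero subtraction; already at $\alpha=0$ the constraint $a<1$ is what prevents a logarithmic loss.
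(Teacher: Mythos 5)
The paper proves this proposition directly on the physical side: the size estimate \eqref{size} with $\nu=(a,0,\dots,0)$ falls out of \eqref{rclass} because $|x|_j\le 1+|x|$ for every $j$ and the exponents $a+Q_1+|\alpha_1|,\,Q_2+|\alpha_2|,\dots,Q_d+|\alpha_d|$ are nonnegative and sum to $Q+a+|\alpha|$; the cancellation is then a one-line split at $|x|=R^{-1}$ using $\int\phi=0$. Your route is genuinely different: you go through Proposition \ref{gen_flag} and estimate $\widehat\phi$. This is legitimate in principle, and your treatment of the $\alpha=0$ case ($|\widehat\phi(\xi)|\lesssim|\xi|^a$) is the exact Fourier-side shadow of the paper's cancellation computation, with the same split at $|x|=|\xi|^{-1}$ and the same role of $a<1$.

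However, there is a genuine gap in the part you yourself flag as the hard part. For $\alpha\ne0$ you want to write
\[
D^{\alpha}\widehat\phi(\xi)=\int(-ix)^{\alpha}\phi(x)e^{-i\xi\cdot x}\,dx ,
\]
but $x^{\alpha}\phi\notin L^1(\Ge)$: from $|\phi(x)|\lesssim(1+|x|)^{-Q-a}$ one gets $|x^{\alpha}\phi(x)|\lesssim(1+|x|)^{|\alpha|-Q-a}$, and since every nonzero $\alpha$ has homogeneous degree $|\alpha|\ge p_1=1>a$, the integral over $\{|x|\ge1\}$ diverges. Your proposed fix — subtracting $1$ to exploit mean-zero — only repairs the integral near the origin (where it was already convergent); the divergence is at infinity, where $e^{-i\xi\cdot x}-1$ is merely bounded. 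Likewise, your integration by parts in $x_j$ applied to $x^{\alpha}\phi$ does not restore integrability when $|\alpha|$ is large relative to the number of times you integrate by parts, so the Leibniz-rule remark doesn't close the estimate. Making this rigorous would require a more careful regularization (e.g.\ subtracting the leading homogeneous asymptotics of $\phi$ at infinity, or arguing through tempered distributions and homogeneous distributions), and would also have to separately justify the claim, implicitly used when you invoke Proposition \ref{gen_flag}, that $\widehat\phi$ is smooth off $\{\xi_d=0\}$ — this is not automatic for a mere $L^1$ function. In short, the Fourier-side route forces you to re-prove half of Proposition \ref{gen_flag} for this specific kernel, whereas the paper's physical-side verification gets the size bounds for free from $\mathcal{R}(a)$ and only has to work for the cancellation.
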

\begin{proof}
The size condition (\ref{size}) follows by (\ref{rclass}). To verify the cancellation condition (\ref{cancellation}) let $f\in\Schw(\Ge)$ and $R>0$. Then
\begin{align*}
\int_{\Ge}&f(Rx)\phi(x)\,dx=\int_{\Ge}\big(f(Rx)-f(0)\big)\phi(x)\,dx \\
&\le\int_{|x|\le R^{-1}}\big(f(Rx)-f(0)\big)\phi(x)\,dx
+\int_{|x|\ge R^{-1}}\big(f(Rx)-f(0)\big)\phi(x)\,dx \\
&\le\|f\|\left(R\int_{|x|\le R^{-1}}|x|^{-Q-a+1}\,dx+2\int_{|x|\ge R^{-1}}|x|^{-Q-a}\,dx\right)\\
&\le CR^a\|f\|,
\end{align*}
where $\|\cdot\|$ is a Schwartz class norm. 
\end{proof}

Let
\[
\langle P,f\rangle
=\lim_{\e\to}\int_{|x|\ge\e}\Big(f(0)-f(x)\Big)\frac{dx}{|x|^{Q+1}},
\qquad
f\in\Schw(\Ge).
\]
The distribution $P$ is an infinitesimal generator of a continuous semigroup of probability measures with smooth densities
\[
h_t(x)=t^{-Q}h(t^{-1}x),
\]
where $h\in\eR(1)$ and $P^Nh\in\eR(N)$ for $N=1,2,\dots$. In other words,
\[
h_t\star h_s=h_{t+s},
\qquad
t,s>0. 
\]
and
\[
\frac{d}{dt}\big|_{t=0}<h_t,f>=-<P,f>,
\qquad
f\in\Schw(\Ge),
\]
The operator $Pf=f\star P$ is essentially selfadjoint with $\Schw(\Ge)$ for its core domain. The reader is referred to \cite{inventiones1986} for proofs and details. 

For $0<{a}<1$
\begin{equation}\label{m}
\langle P^{a},f\rangle=\frac{1}{\Gamma(-{a})}\int_0^{\infty}t^{-1-{a}}\langle
\delta_0-h_t, f\rangle\,dt=\frac{1}{\Gamma(1-{a})}\int_0^{\infty}t^{-{a}}\langle
Ph_t, f\rangle\,dt
\end{equation}
defines a homogeneous distribution smooth away from the origin (cf., e.g. Yosida \cite{yosida}).

\begin{proposition}\label{pm}
For every $0<a<1$,
\[
P^{a}h\in\eR(a)
\qquad
{\rm and}
\qquad
\int_{\Ge}P^{a}h(x)\, dx=0.
\]
\end{proposition}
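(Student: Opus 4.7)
The plan is to obtain a pointwise integral representation of $P^{a}h$ from (\ref{m}) and then reduce both claims to elementary estimates. Starting from the second equality in (\ref{m}), viewed distributionally as
\[
P^{a}=\frac{1}{\Gamma(1-a)}\int_{0}^{\infty}t^{-a}Ph_{t}\,dt,
\]
I would convolve with $h=h_{1}$ and use the semigroup identity $h_{t}\star h_{1}=h_{t+1}$ to arrive at the working formula
\[
P^{a}h(x)=\frac{1}{\Gamma(1-a)}\int_{0}^{\infty}t^{-a}Ph_{t+1}(x)\,dt.
\]
The key scaling input is $Ph_{s}(x)=s^{-Q-1}(Ph)(s^{-1}x)$, coming from $Ph=-\frac{d}{ds}h_{s}\big|_{s=1}$ and the homogeneity of the dilations; combined with $Ph\in\eR(1)$, this yields the pointwise bound $|Ph_{t+1}(x)|\le C(t+1+|x|)^{-Q-1}$.

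The cancellation $\int P^{a}h=0$ would then follow by Fubini once the appropriate $L^{1}(dx\,dt)$ bound is in hand: each $h_{s}$ is a probability density, so $\int_{\Ge}Ph_{s}(x)\,dx=-\frac{d}{ds}\int_{\Ge}h_{s}\,dx=0$ for every $s>0$, and the $t$-integral of zero vanishes.

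For the size estimate I would insert the bound on $Ph_{t+1}$ into the representation and split the $t$-integral at $t=|x|$ when $|x|\ge 1$: on $[0,|x|]$ the denominator $(t+1+|x|)^{Q+1}$ is comparable to $|x|^{Q+1}$, giving a contribution of order $|x|^{-Q-a}$; on $[|x|,\infty)$ the integrand behaves like $t^{-Q-1-a}$, whose integral is again of order $|x|^{-Q-a}$. For $|x|\le 1$ the elementary bound $\int_{0}^{\infty}t^{-a}(1+t)^{-Q-1}\,dt<\infty$ gives a uniform constant. The derivative bounds follow identically: a left-invariant $D^{\alpha}$ passes under the integral, scales as $D^{\alpha}Ph_{s}(x)=s^{-Q-1-|\alpha|}(D^{\alpha}Ph)(s^{-1}x)$, and the estimate $|D^{\alpha}Ph(y)|\le C_{\alpha}(1+|y|)^{-Q-1-|\alpha|}$ built into $Ph\in\eR(1)$ yields $|D^{\alpha}Ph_{t+1}(x)|\le C_{\alpha}(t+1+|x|)^{-Q-1-|\alpha|}$, after which the same two-region split gives $|D^{\alpha}P^{a}h(x)|\le C_{\alpha}(1+|x|)^{-Q-a-|\alpha|}$.

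No individual step is hard. The main technical point I expect to be the most delicate is the scaling identity for $Ph_{s}$ together with the justification of the Fubini and differentiation-under-the-integral interchanges; once these are checked carefully once, the two-region splitting of the $t$-integral is entirely routine.
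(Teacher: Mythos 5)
Your proposal is correct and follows essentially the same route as the paper: the same representation $P^{a}h(x)=\frac{1}{\Gamma(1-a)}\int_{0}^{\infty}t^{-a}Ph_{t+1}(x)\,dt$, the same pointwise bound $|D^{\alpha}Ph_{t+1}(x)|\le C_{\alpha}(t+1+|x|)^{-Q-1-|\alpha|}$ coming from $Ph\in\eR(1)$ and the scaling of $Ph_{s}$, and the same Fubini argument based on $\int_{\Ge}Ph_{s}\,dx=-\frac{d}{ds}\int_{\Ge}h_{s}\,dx=0$ for the cancellation. The only (cosmetic) difference is in the $t$-integral: you split at $t=|x|$ into two regimes, whereas the paper factors $(t+1+|x|)=(1+|x|)\left(\tfrac{t}{1+|x|}+1\right)$ and substitutes, reducing to $\int_{0}^{\infty}t^{-a}(1+t)^{-Q-1-|\alpha|}\,dt<\infty$; both give $(1+|x|)^{-Q-a-|\alpha|}$.
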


\begin{proof}
By (\ref{m}),
\[
P^{a}h(x)=\frac{1}{\Gamma(1-{a})}\int_0^{\infty}t^{-{a}}Ph_{t+1}(x)\,dt,
\]
whence
\begin{align*}
|D^{\alpha}P^{a}h(x)|&\le\frac{C_{\alpha}}{\Gamma(1-{a})}\int_0^{\infty}\frac{t^{-{a}}\,dt}{(t+1+|x|)^{Q+1+|\alpha|}}\\
&\le C_{\alpha}'\int_0^{\infty}\frac{t^{-{a}}\,dt}{(\frac{t}{1+|x|}+1)^{Q+1+|\alpha|}}\cdot(1+|x|)^{-Q-1-|\alpha|}\\
&\le C_{\alpha}''\int_0^{\infty}\frac{t^{-{a}}\,dt}{(t+1)^{Q+1+|\alpha|}}\cdot(1+|x|)^{-Q-{a}-|\alpha|},
\end{align*}
as required.

Now, for every $t>0$,
\[
\int h_t\,dx=1.
\]
Therefore,
\[
\int Ph_t\,dx=-\frac{d}{dt}\int h_t\,dx=0,
\qquad
t>0.
\]
which combined with (\ref{m}) gives the second part of the assertion. 
\end{proof}

\section{Littlewood-Paley theory}
From now on we fix the function $\phi=P^{1/2}h_{1/2}$.  
\begin{remark}\label{fixfi}
By the results of the previous section, $\phi$ is a smooth function satisfying the estimates
\begin{equation}\label{fiestim}
|D^{\alpha}\phi(x)|\le C_{\alpha}(1+|x|)^{-Q-1/2-|\alpha|}.
\end{equation}
Moreover, $\phi\in\Fe(1/2)$.
\end{remark}
\begin{lemma}\label{fi}
We have
\[
f=\int_0^{\infty}f\star\phi_t\star\phi_t\,\frac{dt}{t},
\qquad
f\in\Schw(\Ge).
\]
\end{lemma}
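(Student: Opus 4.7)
The idea is to identify the composite kernel $\phi_t\star\phi_t$ with a simple expression in terms of the semigroup $(h_t)$, after which the identity will reduce to a one-line application of the fundamental theorem of calculus.

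First I would use that $P^{1/2}$ is homogeneous of degree $1/2$ under the dilations $\delta_t$ (immediate from (\ref{m}) together with the homogeneity of $h_t$), combined with the scaling $t^{-Q}h_{1/2}(t^{-1}x)=h_{t/2}(x)$, to obtain
\[
\phi_t(x)=t^{-Q}\phi(t^{-1}x)=t^{-Q}(P^{1/2}h_{1/2})(t^{-1}x)=t^{1/2}(P^{1/2}h_{t/2})(x).
\]
Since $P^{1/2}h_{t/2}$ is the convolution kernel of the operator $P^{1/2}e^{-tP/2}$ in the functional calculus of the essentially self-adjoint $P$, composing and using $(P^{1/2}e^{-tP/2})^2=Pe^{-tP}$ (whose kernel is $Ph_t$) yields the key identity
\[
\phi_t\star\phi_t=t\cdot Ph_t.
\]
Equivalently, one may justify this by $h_{t/2}\star h_{t/2}=h_t$ together with the commutativity under convolution of distributions obtained by the $P$-functional calculus.

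Next, the generator identity $\partial_t h_t=-Ph_t$ gives
\[
f\star\phi_t\star\phi_t=t\,(f\star Ph_t)=-t\,\partial_t(f\star h_t),
\]
so that for $f\in\Schw(\Ge)$ and $0<\e<T$,
\[
\int_\e^T f\star\phi_t\star\phi_t\,\frac{dt}{t}=-\int_\e^T\partial_t(f\star h_t)\,dt=f\star h_\e-f\star h_T.
\]
As $\e\to 0$ the first term converges to $f$ uniformly because $(h_\e)$ is an approximation of the identity and $f$ is Schwartz; as $T\to\infty$ the second tends to zero uniformly since
\[
\|f\star h_T\|_\infty\le\|f\|_2\,\|h_T\|_2=c\,T^{-Q/2}\|f\|_2\longrightarrow 0.
\]

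The main obstacle is the first step, namely the kernel identity $\phi_t\star\phi_t=tPh_t$; once it is established, the remainder is a direct application of the fundamental theorem of calculus for the heat semigroup.
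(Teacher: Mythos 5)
Your proof is correct and follows essentially the same route as the paper: identify $\phi_t\star\phi_t$ with $t\,Ph_t$, rewrite as $-t\,\partial_t(f\star h_t)$, and integrate by the fundamental theorem of calculus. The only difference is cosmetic — you spell out the homogeneity/functional-calculus argument for $\phi_t\star\phi_t=t\,Ph_t$, which the paper compresses into the phrase ``by the semigroup properties,'' and you upgrade the paper's distributional convergence at the endpoints to uniform convergence, which is fine but not needed.
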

\begin{proof}
By the semigroup properties,
\[
 -\frac{d}{dt}f\star h_t=f\star Ph_t=\frac{1}{t}f\star(\phi_t\star\phi_t),
\]
whence
\[
\int_{\e}^Mf\star\phi_t\star\phi_t\frac{dt}{t}=f\star h_{\e}- f\star h_M.
\]
Now, if $\e\to0$ and  $M\to\infty$, the expression on the right hand side tends to $f$ in the sense of distributions.
\end{proof}

Let $T=(t_1,\dots,t_d)\in\R_+^d$. We shall regard $\R_+^d$ as a product of copies of the multiplicative group $\R^+$. We shall write
\[
T^{a}=(t_1^{a},\dots, t_d^{a}),
\qquad
TS=(t_1s_1,\dots,t_ds_d),
\qquad
\frac{dT}{T}=\frac{dt_1\dots dt_d}{t_1\dots t_d},
\qquad
a\in\R.
\]

 Let $\phi_k$ be the counterpart of  $\phi$  for $\Ge$ replaced by $\Ge^{(k)}$, $1\le k\le d$. Let
\[
\Phi_k=\delta_k\otimes\phi_k,
\]
where $\delta_k$ stands for the Dirac delta at $0\in\oplus_{j=1}^{k-1}\Ge_j$. Let
\[
\Phi=\Phi_1\star\Phi_2\star\dots\star\Phi_d,
\]
and
\[
\Phi_T=(\Phi_1)_{t_1}\star\dots\star(\Phi_d)_{t_{d}},
\qquad
T\in\R_+^d.
\]
\begin{corollary}\label{unity}
We have
\[
\Phi\in|\Fe|(1/2):=\bigcap_{\e\in\{-1,1\}}\Fe(\e_1/2,\dots,\e_d/2).
\]
Furthermore,
\[
f=\int_{\R_+^d}f\star\Phi_T\star\widetilde{\Phi_T}\,\frac{dT}{T},
\qquad
f\in\Schw(\Ge).
\]
\end{corollary}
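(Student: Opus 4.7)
The plan is to handle the two assertions separately: the reproducing formula follows by iterating Lemma~\ref{fi} one variable at a time, while membership $\Phi\in|\Fe|(1/2)$ reduces, via Proposition~\ref{composition_flag}, to placing each $\Phi_k$ in suitable flag-kernel classes.

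For the reproducing formula, each $\phi_k$ is symmetric---$\widetilde{P_k}=P_k$ because the defining integral for $P_k$ is invariant under $x\mapsto x^{-1}$, whence $\widetilde{h_k}=h_k$ and $\widetilde{\phi_k}=\phi_k$---so $\widetilde{\Phi_T}=(\Phi_d)_{t_d}\star\cdots\star(\Phi_1)_{t_1}$ and the integrand of interest is
\[
(\Phi_1)_{t_1}\star\cdots\star(\Phi_d)_{t_d}\star(\Phi_d)_{t_d}\star\cdots\star(\Phi_1)_{t_1}.
\]
I would integrate over $t_d$ first. Because $\Ge_d$ is central in $\Ge$, one has $(\Phi_d)_{t_d}\star(\Phi_d)_{t_d}=\delta\otimes((\phi_d)_{t_d}\star(\phi_d)_{t_d})$, and the calculation from the proof of Lemma~\ref{fi} on $\Ge_d$ yields $\int_\e^M(\phi_d)_{t_d}\star(\phi_d)_{t_d}\,\frac{dt_d}{t_d}=(h_d)_\e-(h_d)_M$. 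Fubini pulls this through the outer factors, and letting $\e\to0$, $M\to\infty$ collapses the innermost pair into $\delta_\Ge$. Iterating on $t_{d-1},\ldots,t_1$ using Lemma~\ref{fi} on $\Ge^{(d-1)},\ldots,\Ge^{(1)}=\Ge$ unwinds everything to $f\star\delta=f$.

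For the class membership, I would show that $\Phi_k=\delta\otimes\phi_k$ lies in $\Fe(0,\ldots,0,\mu_k,\ldots,\mu_d)$ on $\Ge$ for every $(\mu_k,\ldots,\mu_d)\in[-1/2,1/2]^{d-k+1}$. Since $\widehat{\Phi_k}(\xi)=\widehat{\phi_k}(\xi_k,\ldots,\xi_d)$ is independent of $\xi_1,\ldots,\xi_{k-1}$, this reduces by Proposition~\ref{gen_flag} to the Fourier bound $|D^\alpha\widehat{\phi_k}(\eta)|\le C\prod_l|\eta|_l^{\mu_l-|\alpha_l|}$ on $\Ge^{(k)}$. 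By the geometric-mean inequality $\min(A,B)\le A^\lambda B^{1-\lambda}$, it suffices to verify these at the vertices $\mu\in\{-1/2,1/2\}^{d-k+1}$; the case $\mu=(1/2,\ldots,1/2)$ is Remark~\ref{fixfi}, while the other vertices come from the Fourier-side structure of $\phi_k=P_k^{1/2}(h_k)_{1/2}$ (half-order vanishing at the origin together with the rapid decay furnished by the semigroup factor). Given $\e\in\{-1,1\}^d$, setting $\mu^{(k)}_j=\e_j/(2j)\in[-1/2,1/2]$ for $1\le k\le j$ gives $\sum_{k=1}^j\mu^{(k)}_j=\e_j/2$ componentwise, so iterated application of Proposition~\ref{composition_flag} produces $\Phi\in\Fe(\e_1/2,\ldots,\e_d/2)$, and hence $\Phi\in|\Fe|(1/2)$.

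The hard step is the Fourier analysis of $\phi_k$ on the nonabelian graded group $\Ge^{(k)}$ needed to reach all vertices of the cube $\{-1/2,1/2\}^{d-k+1}$, since Remark~\ref{fixfi} and Proposition~\ref{fourier} directly supply only the single vertex $(1/2,\ldots,1/2)$. The lifting by $\delta$-tensor, the convex interpolation of multiplier estimates, and the final composition via Proposition~\ref{composition_flag} are then mechanical.
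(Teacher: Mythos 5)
Your elaboration of the reproducing formula is sound and matches the spirit of the paper's terse ``follows from Lemma~\ref{fi}'': symmetry of each $\phi_k$, centrality/normality of the ideals $\Ge^{(k)}$, and one-variable-at-a-time collapse via the calculation inside Lemma~\ref{fi} is exactly what is needed.

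The class-membership argument, however, takes a detour that creates a difficulty the paper never faces. You try to put each $\Phi_k$ into $\Fe(0,\ldots,0,\mu_k,\ldots,\mu_d)$ for an entire cube $(\mu_k,\ldots,\mu_d)\in[-1/2,1/2]^{d-k+1}$, and then distribute the target exponent $\e_j/2$ equally over the $j$ factors $\Phi_1,\ldots,\Phi_j$ as $\e_j/(2j)$. This forces you to control $\widehat{\phi_k}$ at every vertex of a high-dimensional cube, and you rightly flag that step as hard and unresolved. But that step is unnecessary. The paper only uses the much weaker fact that
\[
\Phi_k\in\Fe(0,\ldots,0,1/2,0,\ldots,0)\cap\Fe(0,\ldots,0,-1/2,0,\ldots,0),
\]
with the single nonzero entry in position $k$. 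This is what Remark~\ref{fixfi} together with $\widehat{\Phi_k}(\xi)=\widehat{\phi_k}(\xi_k,\ldots,\xi_d)$ gives: $\phi_k$ is (after tensoring with $\delta$) a kernel of order $\pm1/2$ purely at the $k$-th scale and of order $0$ at all the finer scales, essentially because $\widehat{\phi_k}$ is smooth, vanishes to half order at the origin, and decays rapidly. With that one assertion in hand, a single allocation suffices: choose $\Phi_k\in\Fe(0,\ldots,0,\e_k/2,0,\ldots,0)$ and apply Proposition~\ref{composition_flag} $d-1$ times; the exponent vectors simply add coordinatewise to $(\e_1/2,\ldots,\e_d/2)$ with all intermediate stages inside $\mathcal N$. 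No interpolation across a cube of multiplier bounds, and no ``half-order vanishing combined with semigroup decay at all vertices,'' is ever required. So the gap in your proposal is real, but it is self-inflicted: the simple choice of one nonzero exponent per factor removes it entirely and is exactly the route the paper takes.
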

\begin{proof}
By  Remark \ref{fixfi},
\[
\Phi_k\in\Fe(0,\dots0,1/2,0,\dots,0)\cap\Fe(0,\dots0,-1/2,0,\dots,0),
\]
where the only nonzero term stands on the $k$-th position. Therefore the  first part of our assertion 
follows by Proposition \ref{composition_flag}. The second one is a consequence of Lemma\nobreak \ \ref{fi}. 
\end{proof}
\begin{proposition}\label{paley}
The Paley-Littlewood square function
\[
G_{\Phi}(f)(x)=\left(\int_{\R_+^d}|f\star\Phi_T(x)|^2\,\frac{dT}{T}\right)^{1/2},
\]
is bounded as an operator on $L^p(\Ge)$. In other words, for every $1<p<\infty$, there is a constant $C_{\phi,p}>0$ such that
\[
\|G_{\Phi}(f)\|_p\le C_{\phi,p}\|f\|_p,
\qquad
f\in\Schw(\Ge).
\]
\end{proposition}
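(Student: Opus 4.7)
The plan is to prove the bound by induction on the homogeneous step $d$, using the one-parameter Littlewood-Paley theory for the subordinated semigroup as the base case and a vector-valued iteration for the inductive step.

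The base case $d=1$ is the classical one-parameter Littlewood-Paley inequality for the semigroup-adapted kernel $\phi_t$. For $L^2$, Plancherel reduces it to showing $\int_0^\infty|\widehat{\phi_t}(\xi)|^2\,dt/t\le C$ uniformly in $\xi$, which follows from $\phi\in\Fe(1/2)\cap\Fe(-1/2)$ (Proposition \ref{fourier} and Remark \ref{fixfi}) via the Fourier characterization in Proposition \ref{gen_flag}: the bound $|\widehat{\phi}(\xi)|\le C\min(|\xi|^{1/2},|\xi|^{-1/2})$ squared and integrated in $t$ reduces to $\int_0^\infty\min(s,s^{-1})\,ds/s<\infty$. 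For general $L^p$, the estimate (\ref{fiestim}) and its derivative analogues show that $\phi_t(x)$ satisfies Calder\'on-Zygmund size and smoothness bounds as a function of $x$ with constants compatible with the $L^2(dt/t)$-norm; vector-valued Calder\'on-Zygmund theory then yields $L^p$ boundedness for $1<p<\infty$. Since the convolution kernel is scalar, this tensorizes automatically to the Hilbert-space-valued inequality on $L^p(\Ge;\mathcal{H})$ for any Hilbert space $\mathcal{H}$.

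For the inductive step $d\ge2$, decompose $\Phi_T=(\Phi_1)_{t_1}\star\Phi^{(2)}_{T'}$, where $T'=(t_2,\dots,t_d)$ and $\Phi^{(2)}=\Phi_2\star\cdots\star\Phi_d$ is the analogous object associated with the normal subgroup $\Ge^{(2)}$. By the inductive hypothesis, the corresponding $(d-1)$-parameter square function $G^{(2)}$ is bounded on $L^p(\Ge^{(2)})$. Lifting by Fubini over the $\Ge_1$-variable---using that conjugation by $\Ge_1$ acts as a graded, measure-preserving automorphism of $\Ge^{(2)}$, so the $L^p$-bounds are uniform in the $\Ge_1$-slice---this promotes to a bounded operator
\[
\mathcal{T}^{(2)}\colon L^p(\Ge)\to L^p\bigl(\Ge;L^2(\R_+^{d-1},dT'/T')\bigr),\qquad
\mathcal{T}^{(2)}f(x)(T')=f\star\Phi^{(2)}_{T'}(x).
\]
Since each $\Phi^{(2)}_{T'}$ is a scalar kernel, $\mathcal{T}^{(2)}$ extends to Hilbert-space-valued arguments. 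The base case in vector-valued form supplies $\mathcal{A}_1 f(x)(t_1):=f\star(\Phi_1)_{t_1}(x)$ bounded from $L^p(\Ge)$ into $L^p(\Ge;L^2(dt_1/t_1))$. By associativity of convolution, $(\mathcal{T}^{(2)}\circ\mathcal{A}_1)f(x)(t_1,T')=f\star\Phi_T(x)$, whence composing the two bounds gives
\[
\|G_\Phi f\|_p=\|\mathcal{T}^{(2)}\mathcal{A}_1 f\|_{L^p(\Ge;L^2(\R_+^d,dT/T))}\le C\|f\|_p.
\]

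The main obstacle is in the vector-valued part: one must show that the inductive hypothesis on $\Ge^{(2)}$ transfers to the full group $\Ge$ with constants independent of the $\Ge_1$-coordinate, and that the scalar-kernel operator $\mathcal{T}^{(2)}$ genuinely extends to the Hilbert-space-valued $L^p$ setting needed for the composition with $\mathcal{A}_1$. This is precisely where the product/flag geometry interacts with the non-commutativity of $\Ge$; it is handled by the vector-valued Calder\'on-Zygmund scheme of Duoandikoetxea and Rubio de Francia together with the strong maximal function of Christ, which supplies the weak-type endpoint and the maximal comparisons needed to absorb error terms.
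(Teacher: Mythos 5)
Your overall strategy -- iterating one-parameter Littlewood--Paley estimates via vector-valued Calder\'on--Zygmund theory -- is the same as the paper's. The paper writes the square function as a composition $W_d\circ\cdots\circ W_1$, where $W_k$ convolves (in the $(x_k,\dots,x_d)$-variables) against the operator-valued kernel $F_k(x)\colon X_{k-1}\to X_k$, $X_k=L^2(\R_+^k,dT/T)$, checks that each $W_k$ is an $L^2$-isometry, verifies the vector-valued kernel bound $\|D^\alpha F_k(x)\|\le C_\alpha|x|_k^{-Q-|\alpha|}$, and invokes Folland--Stein Theorem~6.20.b for the $L^p$ bound. Your induction on $d$, unfolded, reproduces exactly this composition. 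However, several of your individual steps are incorrect or need repair.

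First, your $L^2$ base case via Euclidean Plancherel is wrong. On a nonabelian homogeneous group the Euclidean Fourier transform does \emph{not} intertwine group convolution with multiplication: $\widehat{f\star\phi_t}\neq\widehat f\,\widehat{\phi_t}$. So the bound $|\widehat\phi(\xi)|\lesssim\min(|\xi|^{1/2},|\xi|^{-1/2})$ from Propositions~\ref{fourier} and~\ref{gen_flag}, while true, does not yield the $L^2$ square-function estimate for the convolution operator $f\mapsto f\star\phi_t$. The correct route is the one the paper uses: the subordination identity $\phi_t\star\phi_t=tPh_t=-t\,\frac{d}{dt}h_t$ from Lemma~\ref{fi}, together with the symmetry $\widetilde\phi=\phi$, gives the $L^2$-isometry of $W_1$ directly, without passing through any Fourier transform.

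Second, the claim that conjugation by $\Ge_1$ is a \emph{graded} automorphism of $\Ge^{(2)}$ is false. For $g=\exp X$ with $X\in\Ge_1$, the map $\mathrm{Ad}(g)=\exp(\mathrm{ad}\,X)$ preserves the filtration $\Ge^{(k)}$ but raises grading degree; it is not grading-preserving. The correct justification for the Fubini lift is simpler: writing $x=(x_1,0)\,z$ with $z\in\Ge^{(2)}$ is a measure-preserving (unipotent) change of variables, and right convolution by a kernel supported on the normal subgroup $\Ge^{(2)}$ becomes genuine $\Ge^{(2)}$-convolution in the $z$-variable, so the inductive hypothesis applies slicewise with uniform constant. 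This is salvageable, but the reasoning as you state it does not hold.

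Third, your one-line assertion that $\mathcal T^{(2)}$ ``extends to Hilbert-space-valued arguments'' because the kernel is scalar needs an explicit appeal to the Marcinkiewicz--Zygmund/Kahane--Khintchine tensoring theorem (a bounded linear $T\colon L^p\to L^p(H_1)$ extends, componentwise, to $L^p(H_0)\to L^p(H_1\otimes H_0)$ with comparable norm). It is true but not automatic, and the way you phrase it hides the step. The paper sidesteps this by defining the $W_k$ on all of $\Ge$ from the start with operator-valued kernels, so that Folland--Stein's vector-valued theorem is invoked once per $k$ with no separate tensoring step.

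Finally, the strong maximal function of Christ plays no role in this Proposition; it is used later in the paper (Corollary~\ref{mchrist} and Lemma~\ref{maxi}) for the maximal comparisons in the proof of Theorem~\ref{main}, not for the square-function bound. Invoking it here to ``absorb error terms'' is a red herring: the proof of Proposition~\ref{paley} has no error terms to absorb.
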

\begin{proof}
The proof is implicitly contained in Folland-Stein \cite{folland} (see Theorem 6.20.b and Theorem 7.7) so we dispense ourselves with presenting all details.

We start with defining some Hilbert spaces and operators. Let $X_0=\C$ and
\[
X_k=L^2(\R^k_+,\frac{dT}{T}),
\qquad
1\le k\le d.
\]
For a given $x\in\Ge$, let $F_k(x):X_{k-1}\to X_k$ be given by
\[
F_k(x)m(t_1,\dots,t_{k-1},t_k)=(\phi_k)_{t_k}(x_k,\dots,x_d)m(t_1,\dots,t_{k-1}).
\qquad
m\in X_{k-1}.
\]
Finally, let $W_k:C_c(\Ge,X_{k-1})\to C_0(\Ge,X_k)$ be the operator
\[
W_kf(x)(T,t_k)=(f\star F_k)(x)(T,t_k)=\int_{\Ge^{(k)}}(\phi_k)_{t_k}(y)f(xy)(T)\,dy,
\]
where $T=(t_1,\dots,t_{k-1})$. Note that $W_k$ acts only on $(x_k,\dots,x_d)$-variable.

We claim that 
$$
W_k:L^2(\Ge,X_{k-1})\to L^2(\Ge,X_k)
$$
is an isometry. In fact, by definition of $\Phi_k$,
\begin{align*}
\|W_kf\|_{L^2(\Ge,X_k}^2&=\int_{\Ge}\|W_kf(x)\|_{X_k}^2\,dx\\
&=\int_{\Ge}dx\int_0^{\infty}\frac{dt}{t}
\int_{\R^{k-1}_+}\frac{dT}{T}\int_{\Ge^{(k)}}|(\phi_k)_t(y)f(xy)(T)|^2\,dy\\
&=\int_{\R^{k-1}_+}\frac{dT}{T}\int_0^{\infty}\frac{dt}{t}
\int_{\Ge}\int_{\Ge^{(k)}}|(\phi_k)_t(y)f(xy)(T)|^2\,dydx,\\
&=\int_{\R^{k-1}_+}\frac{dT}{T}\int_0^{\infty}\frac{dt}{t}<f_T\star(\Phi_k)_t,f_T\star(\Phi_k)_t>
=\|f\|_{L^2(\Ge,X_{k-1})}^2,
\end{align*}
where $f_T(x)=f(x)(T)$. 

Another property of $W_k$ that is needed is the following.  For every $\alpha$
\begin{equation}\label{Kestimate}
\|D^{\alpha}F_k(x)\|_{(X_{k-1},X_k)}\le C_{\alpha}|x|_k^{-Q-|\alpha|}.
\end{equation}
This follows readily from (\ref{fiestim}) specialized to $\phi_k$:
\[
 |D^{\alpha}\phi_k(x)|\le C_{\alpha}(1+|x|_k)^{-Q-1/2-|\alpha|}.
\]
As a bounded operator from $L^2(\Ge,X_{k-1})$ to $L^2(\Ge,X_k)$ satisfying (\ref{Kestimate}) is $W_k$  a vector-valued kernel of type $0$, and, by Theorem 6.20.b of Folland-Stein \cite{folland},  maps $L^p(\Ge,X_{k-1})$ into $L^p(\Ge,X_k)$ boundedly for every $1<p<\infty$.

This implies our assertion. In fact,
\[
G_{\Phi}(f)(x)=\|f\star F_1\star\dots\star F_d(x)\|_{X_d},
\]
and therefore
\[
 \|G_{\Phi}(f)\|_{L^p(\Ge)}=\|T_dT_{d-1}\dots T_1f\|_{L^p(\Ge,X_d)}
\le C\|f\|_{L^p(\Ge,X_0)}=C\|f\|_p. 
\]
\end{proof}

A word of comment on the symbol $\Phi_T$ would be appropriate here. The notation may suggest that the functions $\Phi_T$ are  dilates of a single function. They are not, but they have estimates of this form, which is our justification. The same applies to the symbol $K_T$ below. In the next section we are going to use the same notation for the ``real'' dilates of a function. We hope the reader will not get confused.

\section{The strong maximal function}

For a function $F$ on $\Ge$ and a $T\in\R^d_+$, let
\[
F_T(x)=F_{(t_1,t_2,\dots t_d)}(x)=t_1^{-Q_1}t_2^{-Q_2}\dots t_d^{-Q_d}F(t_1x_1,t_2x_2,\dots,t_dx_d).
\]

\textit{The strong maximal function} on $\Ge$ is defined by
\[
{\bf M}f(x)=\sup_{T\in\R_+^d}\int_{|y|\le1}|f(x(Ty)^{-1})\,dy=\sup_{T}|f\star(\chi_B)_T(x)|,
\]
where $\chi_B$ stands for the characteristic function of the unit ball $B=\{x\in\Ge:|x|\le1\}$, and $Ty=(t_1y_1,\dots,t_dy_d)$.  A theorem of Michael Christ asserts that for every $1<p<\infty$ there exists a constant $C>0$ such that
\[
\|{\bf M}f\|_p\le C\|f\|_p,
\qquad
f\in L^p(\Ge),
\]
that is, ${\bf M}$ is of $(p,p)$ type (see Christ \cite{christ}).

We shall need the following corollary to the Christ theorem. Let
\[
\gamma(t)=\min\{t,t^{-1}\},
\qquad
t>0.
\]
\begin{corollary}\label{mchrist}
Let 
\[
F(x)=\Pi_{j=1}^d\gamma(|x_j|)^a|x_j|^{-Q_j},
\qquad
x\neq0,
\]
for some $a>0$.Then the maximal fuction
\[
M_Ff(x)=\sup_{T\in\R_+^d}|f\star F_T(x)|
\]
is of $(p,p)$ type for $1<p<\infty$.
\end{corollary}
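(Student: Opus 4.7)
The plan is to dominate $M_F f$ pointwise by a constant multiple of the strong maximal function ${\bf M}f$ and then invoke the theorem of Christ quoted above. First observe that $F$ factorises as
\[
F(x)=\prod_{j=1}^d F_j(x_j),\qquad F_j(y)=\gamma(|y|)^a|y|^{-Q_j},
\]
where each $F_j$ is a nonnegative radial decreasing function on $\Ge_j$ (radial with respect to the homogeneous norm). The hypothesis $a>0$ ensures that $F_j\in L^1(\Ge_j)$: in radial coordinates the integral splits as $\int_0^1 r^{a-1}\,dr+\int_1^\infty r^{-a-1}\,dr$, both finite precisely because $a>0$.

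Because each $F_j$ is radial and decreasing on $\Ge_j$, its super-level sets are balls, and the layer-cake formula gives a superposition of ball indicators,
\[
F_j(y)=\int_0^\infty\chi_{B_j(\rho_j(s))}(y)\,ds,
\]
where $B_j(r)=\{y\in\Ge_j:|y|\le r\}$ and $\rho_j$ is the decreasing function determined by $\{F_j>s\}=B_j(\rho_j(s))$; the integrability of $F_j$ translates into $\int_0^\infty\rho_j(s)^{Q_j}\,ds<\infty$. Tensoring and applying Fubini yields
\[
F(y)=\int_{\R_+^d}\chi_{E(s)}(y)\,ds,\qquad E(s)=\{y\in\Ge:|y_j|\le\rho_j(s_j)\text{ for each }j\},
\]
the indicator of a polyrectangular box. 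The same representation holds for the dilate $F_T$, with $E(s)$ replaced by the box $TE(s)$ of radii $t_j\rho_j(s_j)$ and carrying the dilation normaliser $\prod_j t_j^{-Q_j}$.

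Up to the equivalence constants built into the homogeneous norm, every such box is a dilate of the unit ball, namely $TE(s)\asymp T'B$ for $T'\in\R_+^d$ with $t'_j\sim t_j\rho_j(s_j)$. Consequently, directly from the definition of $\bf M$,
\[
\int_{\Ge}|f(xy^{-1})|\chi_{TE(s)}(y)\,dy\le C\,|TE(s)|\,{\bf M}f(x)\approx C\prod_j(t_j\rho_j(s_j))^{Q_j}\,{\bf M}f(x).
\]
Inserting this bound into the layer-cake representation of $F_T$, the factors $\prod t_j^{\pm Q_j}$ cancel cleanly and we obtain, uniformly in $T\in\R_+^d$,
\[
|f\star F_T(x)|\le C\,{\bf M}f(x)\int_{\R_+^d}\prod_{j=1}^d\rho_j(s_j)^{Q_j}\,ds=C\|F\|_1\,{\bf M}f(x).
\]
Taking the supremum over $T$ and quoting Christ's theorem finishes the proof.

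The essential place where $a>0$ is used is the finiteness of $\int_0^\infty\rho_j(s)^{Q_j}\,ds$, equivalent to $F_j\in L^1(\Ge_j)$; without it the envelope weights would not be summable. Beyond that, the argument is a routine multi-parameter analogue of the classical fact that convolution-maximal functions against radial decreasing $L^1$-kernels are dominated pointwise by the Hardy--Littlewood maximal function, with balls replaced by polyrectangular boxes and the Hardy--Littlewood maximal function replaced by the strong maximal function $\bf M$. I do not anticipate a substantive obstacle beyond carefully bookkeeping the dilation conventions so that the $t_j$-factors cancel as claimed.
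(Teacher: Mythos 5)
Your argument is correct and is essentially the same as the paper's: both reduce $M_F$ to the strong maximal function $\mathbf{M}$ by writing the tensor-product kernel $F$ as a superposition of normalized indicators of polyrectangular boxes (you via the layer-cake formula, the paper via approximation from below by simple functions of the form $\sum_R c_R\chi_D(R^{-1}\cdot)$), and both close by observing the coefficients sum to $\|F\|_1$. The only caveat worth noting is that the layer-cake step (and the paper's monotone approximation) quietly uses that each factor $\gamma(|x_j|)^a|x_j|^{-Q_j}$ is radially decreasing, which holds when $a\le Q_j$ (the case actually used, $a=1/2$); for larger $a$ one should replace each factor by its radial decreasing majorant, which is still in $L^1$.
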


\begin{proof}
Let $B_j$ be the unit ball in $\Ge_j$ and let $|B_j|$ be the Lebesgue measure of $B_j$. Let $D=B_1\times\dots\times B_d$. Then for every simple positive function $h\le F$ of the form
\[
h(x)=\sum_{R}c_R\chi_{D}(R^{-1}x),
\qquad
R=(r_1,r_2,\dots,r_d)\in\R_+^d,
\]
we have
\[
h_T(x)=\sum_{R}c_Rr_1^{Q_1}r_2^{Q_2}\dots r_d^{Q_d}(\chi_D)_{RT}(x)
=\frac{C\|h\|_1}{|D|}(\chi_D)_{RT}(x),
\]
and therefore
\[
M_Ff(x)\le\frac{C\|F\|_1}{|D|}{\bf M}f(x),
\]
which completes the proof.
\end{proof}

\section{Flag kernels}

We keep the notation established in previous sections.
\begin{lemma}
Let 
\[
K_{T,S}=\widetilde{\Phi_{TS}}\star K\star{\Phi}_{T},
\qquad
T,S\in\R^d_+.
\]
Then $K_{T,S}\in\Fe(0)$ uniformly, and satisfy the estimates
\begin{equation}\label{F0}
|D^{\alpha}\widehat{K}_{T,S}(\xi)|\le C_{\alpha}\gamma(S)^{1/2}|\xi|_1^{-|\alpha_1|}\dots|\xi|_d^{-|\alpha_d|},
\end{equation}
where
\[
 \gamma(S)=\gamma(s_1)\gamma(s_2)\cdots\gamma(s_d).
\]
\end{lemma}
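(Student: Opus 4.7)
My plan is to derive (\ref{F0}) by iterating the composition rule of Proposition \ref{composition_flag}, and then to deduce the membership $K_{T,S}\in\Fe(0)$ from the fact that $\gamma(S)^{1/2}\le 1$.

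By Corollary \ref{unity} each $\Phi_k$ lies in $\Fe(0,\dots,0,\epsilon_k/2,0,\dots,0)$ (with nonzero entry in the $k$-th slot) for either sign $\epsilon_k\in\{-1,1\}$. A direct check via Proposition \ref{gen_flag} shows that isotropic dilation $K\mapsto K_t$ multiplies the $\Fe(\nu)$-seminorm by a factor of $t^{\nu_1+\cdots+\nu_d}$, so each dilate $(\Phi_k)_{t_k}$ has seminorm bounded by a constant times $t_k^{\epsilon_k/2}$ in the corresponding class. Iterating Proposition \ref{composition_flag} places $\Phi_T=(\Phi_1)_{t_1}\star\cdots\star(\Phi_d)_{t_d}$ into $\Fe(\epsilon_1/2,\dots,\epsilon_d/2)$ with
\[
\|\Phi_T\|_{\epsilon/2,N}\le C\prod_{k=1}^{d}t_k^{\epsilon_k/2},\qquad \epsilon\in\{-1,1\}^d,
\]
and the same argument applied to $\widetilde{\Phi_{TS}}$ gives
\[
\|\widetilde{\Phi_{TS}}\|_{\epsilon'/2,N}\le C\prod_{k=1}^{d}(t_ks_k)^{\epsilon_k'/2},\qquad \epsilon'\in\{-1,1\}^d.
\]

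A final application of Proposition \ref{composition_flag}, together with $K\in\Fe(0)$, then yields $K_{T,S}\in\Fe((\epsilon+\epsilon')/2)$ with
\[
\|K_{T,S}\|_{(\epsilon+\epsilon')/2,N}\le C\prod_{k=1}^{d}t_k^{\epsilon_k/2}(t_ks_k)^{\epsilon_k'/2}.
\]
Choosing $\epsilon_k'=-\epsilon_k$ at each level puts $K_{T,S}$ into $\Fe(0)$ and collapses the $T$-dependence, leaving the bound $C\prod_k s_k^{-\epsilon_k/2}$. Since the signs are chosen independently at each level, the coordinatewise minimum over $\epsilon\in\{-1,1\}^d$ produces $\prod_k\min(s_k^{1/2},s_k^{-1/2})=\gamma(S)^{1/2}$. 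Proposition \ref{gen_flag} then translates this $\Fe(0)$-seminorm bound into the Fourier symbol estimate (\ref{F0}), and the uniform $\Fe(0)$-membership of $K_{T,S}$ is a consequence of $\gamma(S)^{1/2}\le 1$.

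The main technical point is the dilation-scaling law $\|K_t\|_{\nu,N}\asymp t^{\nu_1+\cdots+\nu_d}\|K\|_{\nu,N}$: this is exactly what forces the $t_k$-factors coming from $\Phi_T$ and $\widetilde{\Phi_{TS}}$ to collapse to pure ratios $s_k$ once the signs are chosen opposite, after which the coordinatewise optimization over signs is immediate.
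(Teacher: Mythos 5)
Your proof is correct, and it follows the same basic strategy as the paper's (one-paragraph) argument: use the dilation-scaling of the $\Fe(\nu)$-seminorms together with the composition rule of Proposition~\ref{composition_flag}, and play the two signs of $\pm 1/2$ off against each other. The dilation law you state, $\|K_t\|_{\nu,N}\asymp t^{\nu_1+\cdots+\nu_d}\|K\|_{\nu,N}$, is exactly right via Proposition~\ref{gen_flag}, since $\widehat{K_t}(\xi)=\widehat{K}(\delta_t\xi)$ and $|\delta_t\xi|_j=t|\xi|_j$.

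Where you improve on the paper's exposition is in making the sign bookkeeping explicit. The paper's proof simply says ``$\Phi_T\in|\Fe|(1/2)$ with bounds uniformly proportional to $\gamma(T)^{1/2}$'' and invokes $\gamma(TS)\le\gamma(T)\gamma(S)$ --- but note that $\gamma$ is super-multiplicative, so the inequality as printed is in fact reversed (e.g. $t=10$, $s=1/2$ gives $\gamma(ts)=1/5>1/20=\gamma(t)\gamma(s)$); the intended and true inequality is $\gamma(T)\gamma(TS)\le\gamma(S)$. Moreover, reading the paper literally, the sign choice that makes $\|\Phi_T\|_{\Fe(\e/2)}\lesssim\gamma(T)^{1/2}$ is $\e_k=-\operatorname{sign}(\log t_k)$, while the one that makes $\|\widetilde{\Phi_{TS}}\|_{\Fe(\e'/2)}\lesssim\gamma(TS)^{1/2}$ is $\e_k'=-\operatorname{sign}(\log(t_ks_k))$, and these need not satisfy $\e'=-\e$, which is required for the composite to land in $\Fe(0)$. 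Your route sidesteps this entirely: you impose $\e'=-\e$ from the outset (forced by the target class $\Fe(0)$), observe that the $T$-dependence then cancels identically to leave $\prod_k s_k^{-\e_k/2}$, and optimize levelwise in $\e$ to get $\gamma(S)^{1/2}$. This is cleaner and avoids both the typo and the sign-compatibility issue. In short: correct, same core idea, but a more careful and self-contained argument than the one in the paper.
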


\begin{proof}
By the first part of Corollary \ref{unity}, $\Phi_T\in|\Fe|(1/2)$ with bounds uniformly proportional to $\gamma(T)^{1/2}$. Note that 
\[
\gamma(TS)\le\gamma(T)\cdot\gamma(S).
\]
Thus,  our assertion follows by  Proposition \ref{composition_flag}.
\end{proof}

We let
$$
K_T=K\star\Phi_T,
\qquad
T\in\R_+^d.
$$

\begin{lemma}\label{maxi}
For every $T$, $K_{T}$ is an integrable function, and the maximal operator
\begin{equation}\label{max}
K_{\Phi}^{\star}f(x)=\sup_{T}|f\star|\widetilde{K}_{T}|(x)|
\end{equation}
is of type $(p,p)$ for all $1<p<\infty$.
\end{lemma}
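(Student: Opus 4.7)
The plan is to majorize $|\widetilde{K_T}|$ pointwise by a single function of the product form admitted by Corollary \ref{mchrist}, and then to conclude by a direct appeal to that corollary. Concretely, I look for a fixed $F$ on $\Ge$ of the shape
\[
F(x)=\prod_{j=1}^{d}\gamma(|x_j|)^{1/2}|x_j|^{-Q_j}
\]
such that $|\widetilde{K_T}(x)|\le CF_T(x)$ uniformly in $T\in\R_+^d$. With such a bound,
\[
K_\Phi^\star f(x)\le C\sup_T(|f|\star F_T)(x)=CM_F|f|(x),
\]
so the $(p,p)$-boundedness falls out of Corollary \ref{mchrist}, and integrability of $K_T$ follows because $F_T\in L^1(\Ge)$ for each fixed $T$.

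The first key step is a multi-sign application of the flag calculus. By Corollary \ref{unity}, $\Phi$ lies in $\Fe(\epsilon_1/2,\dots,\epsilon_d/2)$ for every sign vector $\epsilon\in\{-1,+1\}^d$; since the family $\Phi_T$ enjoys estimates of the $T$-dilate form, its seminorms in these classes scale like $\prod_k t_k^{\epsilon_k/2}$. Proposition \ref{composition_flag}, applied with $\nu=0$ and $\mu=\epsilon/2$, then places $K_T=K\star\Phi_T$ in $\Fe(\epsilon_1/2,\dots,\epsilon_d/2)$ for every such $\epsilon$, with the corresponding scaling. Reading off the size estimate \eqref{size} from each of the $2^d$ classes and taking the coordinate-wise minimum produces the pointwise bound
\[
|K_T(x)|\le C\prod_{k=1}^{d}|x|_k^{-Q_k}\gamma(t_k/|x|_k)^{1/2},\qquad \gamma(s)=\min(s,s^{-1}),
\]
which already delivers the integrability of $K_T$ for every fixed $T$.

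The second step is to dominate this expression by $CF_T(x)$. Using the comparability $|x|_k\approx\sum_{j\le k}|x_j|$, one performs a case split on which $|x_j|$, $j\le k$, dominates $|x|_k$; in each case the right-hand side above is bounded by an analogous product involving only the single-coordinate norms $|x_j|$, matching (up to constants) the form of $F_T$ required by Corollary \ref{mchrist}. The bound transfers to $\widetilde{K_T}(x)=K_T(x^{-1})$ via the symmetry $|x^{-1}|_k\approx|x|_k$ on the homogeneous group. Plugging this into the scheme described at the beginning finishes the proof.

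The main obstacle is this last translation from cumulative norms $|x|_k$ to the separate coordinate norms $|x_j|$. The function $\gamma$ is not monotone, so a naive estimate using only $|x|_k\ge|x_k|$ does not control the factor $\gamma(t_k/|x|_k)^{1/2}$ by $\gamma(t_k/|x_k|)^{1/2}$; one must exploit the flexibility provided by the $2^d$ sign choices to distribute the smoothing exponent $1/2$ across coordinates in a way adapted to the ordering of $t_k$ and of $|x_1|,\dots,|x_k|$. Once this bookkeeping is organised, everything falls cleanly into the framework of Corollary \ref{mchrist}.
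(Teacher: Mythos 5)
Your strategy coincides with the paper's: use Corollary~\ref{unity} and Proposition~\ref{composition_flag} to place $K_T=K\star\Phi_T$ in every class $\Fe(\epsilon_1/2,\dots,\epsilon_d/2)$, $\epsilon\in\{-1,1\}^d$, with norm $\lesssim\prod_k t_k^{\epsilon_k/2}$; minimize over the $2^d$ sign patterns to get
\[
|K_T(x)|\le C\prod_{k=1}^d\gamma(t_k/|x|_k)^{1/2}\,|x|_k^{-Q_k};
\]
then invoke Corollary~\ref{mchrist}. This is also what the paper does; the printed bound there, with a separate factor $\gamma(T)^{1/2}$, reads like a slip for the display above, which is what you correctly derive, and integrability of $K_T$ follows at once.

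Where your write-up goes off track is in the diagnosis of what you call the main obstacle, the passage from the cumulative norms $|x|_k$ to the individual-coordinate norms $|x_j|$. You are right that $\gamma$ alone is not monotone, so $\gamma(t/|x|_k)^{1/2}$ is not pointwise controlled by $\gamma(t/|x_k|)^{1/2}$; but the quantity to be compared is the \emph{combined} factor $r\mapsto\gamma(t/r)^{1/2}r^{-Q_k}$, and this is monotone decreasing in $r$ as soon as $Q_k>1/2$. Indeed $\gamma(u/\lambda)\le\lambda\gamma(u)$ for $\lambda\ge1$, so for $\lambda\ge1$
\[
\gamma\!\left(\tfrac{t}{\lambda\rho}\right)^{1/2}(\lambda\rho)^{-Q_k}
\le\lambda^{1/2-Q_k}\,\gamma\!\left(\tfrac{t}{\rho}\right)^{1/2}\rho^{-Q_k}
\le\gamma\!\left(\tfrac{t}{\rho}\right)^{1/2}\rho^{-Q_k}.
\]
Since $|x|_k\ge c|x_k|$ and $Q_k\ge1$, the bound on $K_T$ in cumulative norms is therefore dominated by the same expression with $|x_j|$ in place of $|x|_j$, which is exactly the product form required by Corollary~\ref{mchrist}; no case split over which $|x_j|$ dominates, and no further distribution of the exponent $1/2$ across coordinates, is needed. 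So the ``obstacle'' collapses to a one-line inequality, and the elaborate bookkeeping you sketch would, if carried out, just reproduce it. Apart from this misjudgement the argument is correct and is the paper's proof.
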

\begin{proof}
Observe that by Proposition \ref{l2bound_flag}, $K_{T}\in L^2(\Ge)$ so it is a function. Moreover,
by Corollary \ref{unity} and Proposition \ref{composition_flag}, it is a smooth away from $x_1=0$, and satisfies  
\[
|K_{T}(x)|\le C\gamma(T)^{1/2}
\gamma(|x|_1)^{1/2}|x|_1^{-Q_1}\dots\gamma(|x|_d)^{1/2}|x|_d^{-Q_d}
\]
uniformly in $T$ so that $K_{T}\le CF_T$, where $F_T$ is a dilate of
\[
F(x)=\gamma(|x|_1)^{1/2}|x|_1^{-Q_1}\dots\gamma(|x|_d)^{1/2}|x|_d^{-Q_d}.
\]
This shows that $K_{T}$ is integrable. The second part of our claim follows by Corollary \ref{mchrist} and the above.
\end{proof}

We turn to the main result of this paper. The reader may wish to compare the proof we give with that of  Theorem B  and the preceding lemma of Duoandicoetxea-Rubio de Francia \cite{duoandi}. 

\begin{theorem}\label{main}
Let $K$ be a flag kernel on $\Ge$. Then  the singular integral operator
\[
f\to f\star\widetilde{K},
\qquad
f\in\Schw(\Ge),
\]
extends uniquely to a bounded operator on $L^p(\Ge)$ for all  $1<p<\infty$.
\end{theorem}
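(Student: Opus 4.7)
The plan is to apply the Calder\'on reproducing formula of Corollary \ref{unity} twice, obtaining
\[
f \star \widetilde{K} = \iint_{\R_+^d \times \R_+^d} (f \star \Phi_T) \star \widetilde{K_{T,S}} \star \widetilde{\Phi_{TS}}\, \frac{dT\,dS}{TS},
\]
with $K_{T,S}$ the kernel analysed in the preceding lemma. Writing $U_S f$ for the inner $T$-integral, Minkowski's inequality reduces the theorem to showing
\[
\|U_S f\|_p \le C_p\,\gamma(S)^{\alpha_p}\,\|f\|_p
\]
for some $\alpha_p > 0$, uniformly in $S \in \R_+^d$, since $\gamma(S)^{\alpha_p}\,dS/S$ is integrable on $\R_+^d$ whenever $\alpha_p>0$.

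For the $L^2$-estimate, with $\alpha_2 = 1/2$, one pairs $U_S f$ against $g \in L^2$, moves the last two convolutions over to $g$, and applies Cauchy--Schwarz in $T$. This reduces matters to the identity $\int \|f\star\Phi_T\|_2^2\,dT/T = \|f\|_2^2$ (which follows from the reproducing formula via Plancherel) and to the operator-norm estimate $\|h\mapsto h\star\widetilde{K_{T,S}}\|_{L^2\to L^2}\lesssim\gamma(S)^{1/2}$, which is immediate from Propositions \ref{l2bound_flag} and \ref{gen_flag} applied to the Fourier bound \eqref{F0}.

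The substantive step is the uniform $L^p$-bound $\|U_S f\|_p \le C_p\|f\|_p$. The same dualisation and pointwise Cauchy--Schwarz in $T$ yield
\[
\|U_S f\|_p \le \|G_{\Phi}(f)\|_p\,\|H_S(g)\|_{p'}, \quad H_S(g)(x)=\left(\int |g\star (\Phi_{TS}\star K_{T,S})(x)|^2\,\frac{dT}{T}\right)^{1/2},
\]
and the first factor is controlled by Proposition \ref{paley}. To bound $H_S$ on $L^{p'}$ uniformly in $S$, I would use the composition calculus of Proposition \ref{composition_flag} to establish a pointwise domination $|\Phi_{TS}\star K_{T,S}(x)|\le C\,F_T(x)$ by a dilate of a fixed tensor-product function $F$ of the kind treated in Corollary \ref{mchrist}. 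This gives $\sup_T|g\star(\Phi_{TS}\star K_{T,S})|\le C M_F g$, which is $L^{p'}$-bounded; combined with the $L^2$-bound for $H_S$ obtained via the identity $\int|\widehat{\Phi_{TS}}(\xi)|^2\,dT/T = 1$ (change of variables $U=TS$), a standard square-function-dominated-by-maximal-function interpolation delivers the uniform $L^{p'}$-bound.

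Interpolating between the decaying $L^2$ bound and the uniform $L^p$ bound produces $\|U_S f\|_p\le C_p\gamma(S)^{\alpha_p}\|f\|_p$ for some $\alpha_p>0$, and Minkowski in $S$ closes the argument. The principal obstacle is the uniform $L^{p'}$-bound on $H_S$: extracting the pointwise decay of $\Phi_{TS}\star K_{T,S}$ in a form amenable to the strong maximal function of Christ is the step where the noncommutative structure of $\Ge$ and the Duoandikoetxea--Rubio de Francia philosophy interact most delicately.
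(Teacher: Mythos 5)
Your overall scheme matches the paper's exactly: the double Calder\'on reproducing formula, the $L^2$-bound with $\gamma(S)^{1/2}$ decay via Propositions \ref{l2bound_flag}, \ref{gen_flag} and \ref{paley}, the uniform $L^p$-bound via a maximal function, interpolation in $S$, and then integration in $S$ followed by duality for $p>2$. The genuine gap is in the step you yourself flag as "the principal obstacle": the uniform $L^{p'}$-bound on the square function $H_S$. First, the pointwise domination $|\Phi_{TS}\star K_{T,S}(x)|\le C\,F_T(x)$ is doubtful as stated, since $\Phi_{TS}\star K_{T,S}=\Phi_{TS}\star\widetilde{\Phi_{TS}}\star K\star\Phi_T$ carries convolutions at \emph{both} scales $T$ and $TS$, and for $s_j\gg1$ its effective spread in the $j$-th variable is $\sim t_js_j$, which a $T$-dilate $F_T$ cannot majorize without an unfavorable $S$-dependent loss. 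Second, and more to the point, even granting that pointwise bound, "square-function-dominated-by-maximal-function interpolation" is not a theorem one can cite: interpolating between an $L^\infty(dT/T)$-valued estimate on $L^{p'}$ and an $L^2(dT/T)$-valued estimate on $L^2$ only gives $L^s(dT/T)$-valued bounds with $s>2$ away from $p=2$, not the $L^2(dT/T)$-valued bound on $L^{p'}$ you need.

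What actually closes the argument — and is the paper's (and Duoandikoetxea--Rubio de Francia's) key move — is a duality/linearization, not an interpolation, and it requires a reorganization you omit: rewrite $h_{TS}\star K_{T,S}=(h\star\Phi_{TS}\star\widetilde{\Phi_{TS}})\star K_T=h'_{TS}\star K_T$ with $K_T=K\star\Phi_T$, so that the band-pass piece $\Phi_{TS}\star\widetilde{\Phi_{TS}}$ is peeled off the kernel and attached to $h$. This produces a genuine Littlewood--Paley square function $\bigl(\int|h'_{TS}|^2\,dT/T\bigr)^{1/2}$ in $h$ (bounded on $L^{q}$ and independent of $S$ after the change of variables $T\mapsto TS$), while the kernel side reduces to $K_T$, which is $S$-independent, uniformly integrable, and dominated by $F_T$ exactly as in Lemma \ref{maxi}. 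Then one dualizes against $u\in L^r$, applies Cauchy--Schwarz to get $|h'_{TS}\star K_T|^2\le C\,|h'_{TS}|^2\star|K_T|$, and bounds $\sup_T u\star|\widetilde{K_T}|=K^\star_\Phi u$ on $L^r$ via Corollary \ref{mchrist}. The maximal function falls on the \emph{dual} function $u$, not on $g$, and this is what makes the $L^2(dT/T)$-valued $L^{p'}$-bound come out. Without the rewriting $h_{TS}\star K_{T,S}=h'_{TS}\star K_T$, the square function of a $T$-independent $g$ is infinite, and the DRdF lemma you are implicitly invoking simply does not apply.
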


\begin{proof}
Let $f,h\in\Schw(\Ge)$. We have
\begin{align*}\label{decomposition}
<f\star\widetilde{K},h>
&=\int_{\R_+^d}\frac{dS}{S}\int_{\R_+^d}\frac{dT}{T}
<f\star\Phi_{T},h\star\Phi_{TS}\star\widetilde{\Phi_{TS}}\star K\star\Phi_T>\\
&=\int_{\R_+^d}\frac{dS}{S}\int_{\R_+^d}\frac{dT}{T}
<f_T,h_{TS}\star K_{T,S}>,
\end{align*}
where
\[
f_{T}=f\star{\Phi}_{T},
\qquad
h_{TS}=h\star\Phi_{TS},
\qquad
K_{T,S}=\widetilde{\Phi_{TS}}\star K\star\Phi_T.
\]

We are going to estimate 
\[
<L_Sf,h>=\int_{\R_+^d}\frac{dT}{T}<f_{T},h_{TS}\star K_{T,S}>
\]
for a given $S$. Let us start with $L^2$-estimates. We have
\[
|<L_Sf,h>|
\le\left(\int_{\Ge}\int_{\R_+^d}|f_{T}(x)|^2\,\frac{dT}{T}dx\right)^{1/2}
\cdot\left(\int_{\Ge}\int_{\R_+^d}|h_{TS}\star K_{T,S}(x)|^2\,\frac{dT}{T}dx\right)^{1/2}.
\]

By (\ref{F0}) and Proposition \ref{l2bound_flag}, the operators $f\to f\star K_{T,S}$ are  bounded with norm estimates uniformly proportional to $\gamma(S)^{1/2}$ so that, by  Proposition \ref{paley},
\begin{align*}
|<L_Sf,h>|&\le C\gamma(S)^{1/2}\,\|G_{\Phi}(f)\|_2\,\|G_{\Phi}(h)\|_2\\
&\le C_1\gamma(S)^{1/2}\|f\|_2\|h\|_2,
\end{align*}
that is,
\begin{equation}\label{l2}
\|L_Sf\|_2\le C_1\gamma(S)^{1/2}\|f\|_2,
\qquad
f\in\Schw(\Ge).
\end{equation}
For $1<p<2$ and $f,h\in\Schw(\Ge)$,
\begin{align*}
|<L_Sf,h>|&
\le\int_{\Ge}\left(\int_{\R_+^d}|f_{T}(x)|^2\frac{dT}{T}\right)^{1/2}
\left(\int_{\R_+^d}|h_{TS}\star K_{T,S}(x)|^2\,\frac{dT}{T}\right)^{1/2}\,dx\\
&\le C_1\|G_{\Phi}(f)\|_p\left(\int_{\Ge}\left(\int_{\R_+^d}|h_{TS}\star K_{T,S}(x)|^2\,\frac{dT}{T}\right)^{q/2}dx\right)^{1/q}\\
&=C_2\|f\|_p\cdot\left\|\int_{\R_+^d}|h_{TS}\star K_{T,S}(\cdot)|^2\,\frac{dT}{T}\right\|_{q/2}^{1/2},
\end{align*}
where  $1/p+1/q=1$. Note that $q>2$. Thus, there exists a nonnegative function $u$ with $\|u\|_r=1$, where $2/q+1/r=1$, such that
\[
\left\|\int_{\R_+^d}|h_{TS}\star K_{T,S}(\cdot)|^2\,\frac{dT}{T}\right\|_{q/2}
=\int_{\Ge}\int_{\R_+^d}|h_{TS}\star K_{T,S}(x)|^2\,\frac{dT}{T}\cdot u(x)\, dx.
\]
Now,
\begin{align*}
h_{TS}\star K_{T,S}&=(h\star\Phi_{TS})\star(\widetilde{\Phi_{TS}}\star K\star\Phi_T)\\
&=(h\star\Phi_{TS}\star\widetilde{\Phi_{TS}})\star(K\star\Phi_T)=h'_{TS}\star K_T.
\end{align*}
Recall also that, by Lemma \ref{maxi}, $K_T$ are integrable functions. Therefore, by Lemma\nobreak\  \ref{maxi} again,
 \begin{align}\label{nomax}
\left\|\int_{\R_+^d}|h'_{TS}\star K_{T}(\cdot)|^2\,\frac{dT}{T}\right\|_{q/2}
&\le C_1\int_{\R_+^d}\int_{\Ge}|h'_{TS}|^2\star|K_{T}|(x)\cdot u(x)\, dx\frac{dT}{T}\notag\\
&\le C_2\int_{\Ge}\int_{\R^d}|h'_{TS}(x)|^2\,\frac{dT}{T}\cdot K_{\Phi}^{\star}u(x)\,dx\notag\\
&\le C_3\|G_{\Phi}(h)\|_q^2\cdot\|K^{\star}_{\Phi}u\|_r\le C_4\|h\|_q^2,
\end{align}
where we have used the estimate
 \begin{align*}
|h'_{TS}\star K_{T}(x)|^2&\le\left(\int_{\Ge}|h'_{TS}(xy^{-1})|\cdot|K_{T}(y)|^{1/2}\cdot|K_{T}(y)|^{1/2}\,dy\right)^2\\
&\le\int_{\Ge}|h'_{TS}|^2(xy^{-1})\cdot|K_{T}|(y)\,dy\cdot\int_{\Ge}|K_{T}(y)|\,dy\\
&\le C|h'_{TS}|^2\star|K_{T}|(x),
\end{align*}
the integrals 
\[
\int_{\Ge}|K_{T}(x)|\,dx\le C
\] 
being uniformly bounded, as can be seen from the proof of Lemma \ref{maxi}. Therefore, 
\begin{equation}\label{lp}
\|L_Sf\|_p\le C_1\|f\|_p.
\end{equation}

\medskip
Now, by interpolating between (\ref{l2}) and (\ref{lp}), we get
\[
\|L_Sf\|_p\le C_2\gamma(S)^{\e_p}\|f\|_p,
\]
where $\e_p>0$ depends only on $p$, and, finally, 
\[
\|f\star\widetilde{K}\|_p\le C_3\left(\int_{\R_+^d}\gamma(S)^{\e_p}\,\frac{dS}{S}\right)\cdot\|f\|_p
=C_4\|f\|_p,
\]
which proves our case for $1<p\le2$. The result for $2<p<\infty$ follows by duality.
\end{proof}

\section*{Acknowledgements}
I wish to extend thanks to Alexander Nagel, Fulvio Ricci, and Elias M. Stein for their interest in my work and an inspiring conversation. I am also indebted to Fran{\c c}ois Piquard for pointing out an editorial omission in the initial version of the manuscript.

\newpage

\end{document}